\def\pd#1#2{\frac{\partial#1}{\partial#2}}
\def\di{\bigstar}
\newcommand{\bea}{\begin{eqnarray}}
\newcommand{\eea}{\end{eqnarray}}
\theoremstyle{plain}
\newtheorem{theorem}{Theorem}
\newtheorem{proposition}[theorem]{Proposition}
\newtheorem{lemma}[theorem]{Lemma}
\theoremstyle{definition}
\newtheorem{definition}[theorem]{Definition}
\newtheorem{note}[theorem]{Note}
\begin{document}

\centerline{\Large {\bf Lie families: theory and applications}} \vskip 0.75cm

\centerline{ Jos\'e F. Cari\~nena$^{\dagger}$, Janusz Grabowski$^{\ddagger}$
 and Javier de Lucas$^{\ddagger}$}
\vskip 0.5cm

\centerline{$^{\dagger}$Departamento de  F\'{\i}sica Te\'orica, Universidad de Zaragoza,}
\medskip
\centerline{50009 Zaragoza, Spain.}
\medskip
\centerline{$^{\ddagger}$Institute of Mathematics, Polish Academy of Sciences,}
\medskip
\centerline{ul. \'Sniadeckich 8, P.O. Box 21, 00-956 Warszawa, Poland}
\medskip
\centerline{}
\medskip

\vskip 1cm

\begin{abstract}
We analyze families of non-autonomous systems of first-order ordinary differential equations admitting a {\it
common time-dependent superposition rule}, i.e., a time-dependent map expressing any solution of each of these
systems in terms of a generic set of particular solutions of the system and some constants. We next study
relations of these families, called {\it Lie families}, with the theory of Lie and quasi-Lie systems and apply
our theory to provide common time-dependent superposition rules for certain Lie families.

\bigskip\noindent
\textit{{\bf MSC 2000:} 34A26, 34A05, 34A34, 17B66, 22E70}

\medskip\noindent
\textit{{\bf PACS numbers:} 02.30.Hq, 02.30.Jr, 02.40.-k}

\medskip\noindent
\textit{{\bf Key words:} superposition rules, Lie--Scheffers systems, quasi-Lie schemes, Milne--Pinney
equations, Abel equations.}

\end{abstract}

\section{Introduction.}
The theory of Lie systems \cite{LS, Ve93, Ve99, Gu93, PW, Ib99, CGM07} deals with non-autonomous systems of
first-order ordinary differential equations such that all their solutions can be written in terms of generic
sets of particular solutions and some constants, by means of a time-independent function. Such
functions are called {\it superposition rules} and the systems admitting this mathematical property are called
{\it Lie systems}. Lie succeeded in characterizing systems admitting a superposition rule. His result, known
now as {\it Lie Theorem} \cite{LS}, states that a non-autonomous system (time-dependent vector field) $X_t$ is
a Lie system if and only if there exists a finite-dimensional Lie algebra of vector fields $V_0$ such that
$X_t\in V_0$ for all $t$.

Note that a superposition rule can be found explicitly even for systems whose general solution is not known,
like in the case of Riccati equations \cite{CLR07b}, and its knowledge enables us to obtain the general
solution out of certain sets of particular solutions in an easier way than solving directly the system.

In the theory of Lie systems various methods have been developed to obtain superposition rules, time-dependent
and time-independent constants of the motion, exact solutions, integrability conditions, and other interesting
properties for particular systems \cite{HWA83, CLR08c, CL08M, OlmRodWin86, OlmRodWin87,
BecHusWin86,BecHusWin86b}. Unfortunately, being a Lie system is rather exceptional and, in order to apply the
methods of the theory of Lie systems to a broader set of non-autonomous systems, some generalizations of this
theory have been proposed. The generalized methods are presently used to investigate some partial differential
equations \cite{CGM07}, a class of second-order differential equations (the so-called {\it SODE Lie systems}
\cite{CLR08c}), certain Schr\"{o}dinger equations \cite{CarRam05b}, etc.

With the same aim of applying the theory of Lie systems to a broader family of systems, it has been recently
developed the theory of {\it quasi-Lie schemes} and  {\it quasi-Lie systems}
\cite{CGL08,CL08Diss,CLL08Emd}. This theory allows us to investigate some non-Lie systems and it can be
applied to dealing with certain second- and even higher-order systems of differential equations. For example,
it enables us to analyze some non-linear oscillators \cite{CGL08}, dissipative Milne--Pinney equations
\cite{CL08Diss}, Emden-Fowler equations \cite{CLL08Emd}, etc. One of the main results obtained through {\it
quasi-Lie scheme} approach is the existence of the so-called {\it time-dependent superposition rules}, that
is, time-dependent superposition functions expressing the general solution in terms of a generic family of
particular solutions of this system.

Note however that the concept of time-dependent superposition rule does not make much sense for a single
non-autonomous system. This is because, as explained in \cite{CGL08}, any single non-autonomous system admits
such a superposition rule which, however, can be as difficult to finding as the general solution of the system
and, therefore, it cannot be generally used to analyze properties of the system. This is analogous to the fact
that each autonomous system is automatically a Lie system, as each single vector field spans a one-dimensional
Lie algebra. Therefore, it only makes non-trivial sense to speak about common time-dependent superposition
rules for a bigger family of non-autonomous systems.

In this paper, we give a natural generalization of Lie Theorem characterizing Lie systems. This enables us to
show that many families of non-autonomous systems of first-order ordinary differential equations are {\it Lie
families}, that is they admit common time-dependent superposition rules. Furthermore, we study some Lie
families and we obtain common time-dependent superposition rules for all of them.

The organization of the paper goes as follows. Section 2 describes common time-dependent superposition rules
in terms of    certain horizontal foliations. In Section 3 we generalize Lie Theorem to characterize families
of systems admitting common time-dependent superposition rules. We posteriorly use this result to analyze the
relations between quasi-Lie systems, Lie systems and time-dependent superposition rules in Section 4. We
finally apply all our results to investigate some Lie families throughout Section 5.

\section{Time-dependent superpositions and foliations.}\label{TDS}

In this Section we develop the concept of common time-dependent superposition rule for a family of
non-autonomous systems of first-order ordinary differential equations and relate this concept to certain
horizontal foliations. For the sake of simplicity, we investigate these concepts in local coordinates, but our
approach can be slightly modified to handle systems on manifolds.

Consider a family, parametrized by elements $\alpha$ of a set $\Lambda$, of non-autonomous systems of
first-order ordinary differential equations on $\mathbb{R}^ n$ of the form
\begin{equation}\label{eq1}
\frac{dx^i}{dt}=Y^i_\alpha(t,x),\qquad i=1,\ldots,n,\qquad \alpha\in\Lambda.
\end{equation}
In applications, $\Lambda$ is often a finite subset of $\mathbb{ N}$ or $\Lambda=C^{\infty}(\mathbb{R})$.
Solutions of these systems are integral curves of the family $\{Y_\alpha\}_{\alpha\in\Lambda}$ of
time-dependent vector fields on $\mathbb{R}^n$ given by
\begin{equation}\label{FamVec}
Y_\alpha(t,x)=\sum_{i=1}^nY^i_\alpha(t,x)\pd{}{x^i},\qquad \alpha\in\Lambda.
\end{equation}
\begin{note}In order to simplify the terminology, we will use $Y_\alpha$ to designate both: a time-dependent
vector field of the above family and the non-autonomous system describing its integral curves.
\end{note}

Denote with $\bar Y_\alpha$ the autonomization of the time-dependent vector field $Y_\alpha$, that is, the
vector field on $\mathbb{R}\times\mathbb{R}^n$ defined by
$$
\bar Y_\alpha(t,x)=\frac{\partial}{\partial t}+\sum_{i=1}^nY^i_\alpha(t,x)\pd{}{x^i}.
$$
Integral curves of (\ref{eq1}) can be identified with trajectories of the vector field (autonomous system) $\bar Y_\alpha$.
Let us state the fundamental concept studied throughout the paper.

\begin{definition}\label{CSRDef} {\rm We say that the family of non-autonomous systems (\ref{eq1}) admits
a {\it common time-dependent superposition rule}, if there exists a map $\Phi:\mathbb{R}\times
\mathbb{R}^{n(m+1)}\rightarrow\mathbb{R}^{n}$,
\begin{equation}\label{TSupRul}
x=\Phi(t,x_{(1)},\ldots,x_{(m)};k_1,\ldots,k_n),
\end{equation}
such that the general solution $x(t)$ of any system $Y_\alpha$ of the family (\ref{eq1}) can be written, at
least for sufficiently small $t$, as
$$
x(t)=\Phi(t,x_{(1)}(t),\ldots,x_{(m)}(t);k_1,\ldots,k_n),
$$
with $\{x_{(a)}(t)\,|\,a=1,\ldots,m\}$ being a generic set of particular solutions of $Y_\alpha$, and
$k_1,\ldots,k_n$ being constants associated with each particular solution. A family of systems (\ref{eq1})
admitting a common time-dependent superposition rule is called a {\it Lie family}.}
\end{definition}

\begin{note}
We do not want to formalize precisely what 'generic' means in the above definition, as it is not crucial for
our purposes and depends on the context. One can have in mind the following example: for a system of linear
homogeneous differential equations 'generic' means that the particular solutions are linearly independent.
\end{note}

Given a common time-dependent superposition rule $\Phi:\mathbb{R}\times
\mathbb{R}^{n(m+1)}\rightarrow\mathbb{R}^n$ of a Lie family $\{Y_\alpha\}_{\alpha\in\Lambda}$, the map
$\Phi(t,x_{(1)},\ldots,x_{(m)};\cdot):\mathbb{R}^n\longrightarrow\mathbb{R}^n$,
$x_{(0)}=\Phi(t,x_{(1)},\ldots,x_{(m)};k)$, is regular for a generic point $(t,x_{(1)},\ldots, x_{(m)}) \in
\mathbb{R}\times \mathbb{R}^{nm}$ and, in view of the Implicit Function Theorem, it can be inverted  to write
$$
k=\Psi(t,x_{(0)},\ldots, x_{(m)}),
$$
for a certain map $\Psi: \mathbb{R}\times\mathbb{R}^{n(m+1)}\rightarrow \mathbb{R}^n$, and
$k=(k_1,\ldots,k_n)$ being the only point in $\mathbb{R}^n$ such that
$$
x_{(0)}=\Phi(t,x_{(1)},\ldots,x_{(m)};k).
$$
\begin{note}
As a matter of fact, the maps $\Psi$ and $\Phi$ are defined only locally on open subsets of
$\mathbb{R}\times\mathbb{R}^{n(m+1)}$ but, for simplicity, we will write $\mathbb{R}\times\mathbb{R}^{n(m+1)}$
for their domains.
\end{note}
Consequently, the map $\Psi$ determines locally a $n$-codimensional foliation $\mathfrak{F}$ of the manifold
$\mathbb{R}\times\mathbb{R}^{n(m+1)}$ into the level sets of $\Psi$. Moreover, as the fundamental property of
the map $\Psi$ establishes that $\Psi(t,x_{(0)}(t),\ldots, x_{(m)}(t))$ is constant for any $(m+1)$-tuple of
particular solutions of any system of the family (\ref{eq1}), the foliation determined by $\Psi$ is invariant
under the permutation of its $(m+1)$ arguments $\{x_{(a)}\,|\,a=0,\ldots,m\}$, and, differentiating
$\Psi(t,x_{(0)}(t),\ldots, x_{(m)}(t))$ with respect to $t$, we get
\begin{equation}\label{Psifi}
\pd{\Psi^j}{t}+\sum_{a=0}^{m}\sum_{i=1}^nY_{\alpha}^i(t,x_{(a)}(t))\pd{\Psi^j}{x^i_{(a)}}=0,\qquad\qquad
j=1,\ldots,n,\quad \alpha\in\Lambda,
\end{equation}
where $\Psi=(\Psi^1,\ldots,\Psi^n)$.

\begin{definition}
{\rm Given a time-dependent vector field  $Y=\sum_{i=1}^nY^i(t,x)\partial/\partial x^i$ on $\mathbb{R}^n$, we
define its {\it prolongation} to $\mathbb{R}\times\mathbb{R}^{n(m+1)}$ as the vector field on
$\mathbb{R}\times\mathbb{R}^{n(m+1)}$ given by
\begin{equation}\label{prolongation}
\widehat Y(t,x_{(0)},\ldots,x_{(m)})=\sum_{a=0}^{m}\sum_{i=1}^nY^ i(t,x_{(a)})\pd{}{x^ i_{(a)}},
\end{equation}
and its {\it time-prolongation} to $\mathbb{R}\times\mathbb{R}^{n(m+1)}$ as the vector field on
$\mathbb{R}\times\mathbb{R}^{n(m+1)}$ of the form
$$
\widetilde Y(t,x_{(0)},\ldots,x_{(m)})=\pd{}{t}+\sum_{a=0}^{m}\sum_{i=1}^nY^ i(t,x_{(a)})\pd{}{x^ i_{(a)}}.
$$}
\end{definition}
The equalities (\ref{Psifi}) show that the functions $\{\Psi^i\,|\,i=1,\ldots,n\}$ are first-integrals for the
vector fields $\{\widetilde Y_\alpha\}_{\alpha\in\Lambda}$, that is, $\widetilde Y_\alpha\Psi^ i=0$ for
$i=1,\ldots,n$ and $\alpha\in\Lambda$. Therefore, the vector fields $\widetilde Y_\alpha$ are tangent to the
leaves of $\mathfrak{F}$.

The foliation $\mathfrak{F}$ has another important property. If the leaf $\mathfrak{F}_k$ is the level set of
$\Psi$ corresponding to a certain $k=(k_1,\ldots,k_n)\in\mathbb{R}^n$, and given
$(t,x_{(1)},\ldots,x_{(m)})\in\mathbb{R}\times\mathbb{R}^{mn}$, there is only one point
$x_{(0)}\in\mathbb{R}^n$ such that  $(t,x_{(0)},x_{(1)},\ldots,x_{(m)})\in\mathfrak{F}_k$. Thus, the
projection onto the last $m\cdot n$ coordinates and the time
$$
\pi:(t,x_{(0)},\ldots,x_{(m)})\in\mathbb{R}\times\mathbb{R}^{n(m+1)}\longrightarrow
(t,x_{(1)},\ldots,x_{(m)})\in \mathbb{R}\times\mathbb{R}^{nm},$$ induces a local diffeomorphism from the leaf
$\mathfrak{F}_k$ of $\mathfrak{F}$ into $\mathbb{R}\times\mathbb{R}^{nm}$. We will say that the foliation
$\mathfrak{F}$ is {\it horizontal}  with respect to the projection $\pi$.

On the other hand, the horizontal foliation defines the common time-dependent superposition rule without
referring to the map $\Psi$. Indeed, if we take a point $x_{(0)}$ and $m$ particular solutions,
$x_{(1)}(t),\ldots,x_{(m)}(t)$, for a system of the family,  then $x_{(0)}(t)$ is the unique curve in
$\mathbb{R}^n$ such that the points of the curve
$$(t,x_{(0)}(t),x_{(1)}(t),\ldots, x_{(m)}(t))\subset
\mathbb{R}\times\mathbb{R}^{nm}$$ belong to the same leaf as the point
$(0,x_{(0)}(0),x_{(1)}(0),\ldots,x_{(m)}(0))$. Thus, it is only the horizontal foliation $\mathfrak{F}$ that
really matters when the common time-dependent superposition rule is concerned. It is in a sense obvious, as
composing $\Psi$ with a diffeomorphism on $\mathbb{R}^n$ changes the superposition function (rearranges the level sets) but yields the same superposition rule. This proves the
following (cf. \cite{CGM07}).

\begin{proposition}\label{Connection}
Giving a common time-dependent superposition rule (\ref{TSupRul}) for a Lie family (\ref{eq1}) is equivalent
to giving a foliation which is horizontal with respect to the projection
$\pi:\mathbb{R}\times\mathbb{R}^{(m+1)n}\rightarrow\mathbb{R}\times\mathbb{R}^{nm}$ and such that the vector
fields $\{\widetilde Y_\alpha\}_{\alpha\in\Lambda}$ are tangent to their leaves.
\end{proposition}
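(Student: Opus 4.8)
The forward direction has essentially been assembled in the discussion preceding the Proposition, so my main task is to package that construction cleanly and then supply the converse.

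**The plan is to prove both implications of the equivalence.** The forward direction---from a common time-dependent superposition rule to a horizontal foliation to whose leaves the $\widetilde Y_\alpha$ are tangent---has in fact already been assembled in the discussion preceding the statement, so my task there is only to collect the pieces. Starting from $\Phi$ as in (\ref{TSupRul}), the Implicit Function Theorem produces the inverse map $\Psi$, whose level sets define the $n$-codimensional foliation $\mathfrak{F}$; horizontality with respect to $\pi$ is precisely the statement that for fixed $(t,x_{(1)},\ldots,x_{(m)})$ and fixed leaf there is a unique $x_{(0)}$, which is the regularity of $\Phi(t,x_{(1)},\ldots,x_{(m)};\cdot)$ guaranteed by genericity; and the tangency of each $\widetilde Y_\alpha$ to the leaves is exactly equation (\ref{Psifi}), i.e. $\widetilde Y_\alpha\Psi^i=0$. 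So the forward direction reduces to reading off these three facts.

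For the converse I would begin with a foliation $\mathfrak{F}$ that is horizontal with respect to $\pi$ and to whose leaves all the $\widetilde Y_\alpha$ are tangent, and reconstruct $\Phi$. Horizontality says that each leaf meets every fibre $\pi^{-1}(t,x_{(1)},\ldots,x_{(m)})$ in exactly one point; labelling the leaves by a parameter $k\in\mathbb{R}^n$ (choosing, locally, a transversal to the foliation) and assigning to each point of $\mathbb{R}\times\mathbb{R}^{n(m+1)}$ the label of its leaf yields a map $\Psi$ whose level sets are the leaves, and for which the partial map $x_{(0)}\mapsto\Psi(t,x_{(0)},x_{(1)},\ldots,x_{(m)})$ is a local diffeomorphism onto its image. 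Inverting this partial map in $x_{(0)}$ gives the sought $\Phi$, with $x_{(0)}=\Phi(t,x_{(1)},\ldots,x_{(m)};k)$ precisely when the point lies on the leaf $\mathfrak{F}_k$.

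It then remains to check that this $\Phi$ really is a common superposition rule, and here the tangency hypothesis does the work. The key observation is that an integral curve of $\widetilde Y_\alpha=\partial/\partial t+\widehat Y_\alpha$ in $\mathbb{R}\times\mathbb{R}^{n(m+1)}$ is exactly a curve $t\mapsto(t,x_{(0)}(t),\ldots,x_{(m)}(t))$ in which each component $x_{(a)}(t)$ is a solution of the same system $Y_\alpha$, since the prolongation (\ref{prolongation}) applies $Y_\alpha$ independently to each of the $m+1$ copies. Because $\widetilde Y_\alpha$ is tangent to the leaves, any such integral curve that starts on $\mathfrak{F}_k$ stays on $\mathfrak{F}_k$; equivalently, the leaf label $\Psi(t,x_{(0)}(t),\ldots,x_{(m)}(t))\equiv k$ is constant along every $(m+1)$-tuple of solutions. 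Given $m$ particular solutions $x_{(1)}(t),\ldots,x_{(m)}(t)$ and an arbitrary initial point $x_{(0)}(0)$, letting $k$ be the label of the leaf through $(0,x_{(0)}(0),\ldots,x_{(m)}(0))$ and invoking horizontal uniqueness then forces $x_{(0)}(t)=\Phi(t,x_{(1)}(t),\ldots,x_{(m)}(t);k)$ for sufficiently small $t$, which is the required expression.

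The genuinely delicate point, and the one I expect to be the main obstacle, is the clean interplay of the two hypotheses: horizontality is what makes $\Phi$ well defined as a function of the remaining data, while tangency is what makes the value it produces evolve as a genuine solution of $Y_\alpha$. Everything else---the inversions and the identification of integral curves of $\widetilde Y_\alpha$ with tuples of solutions---is routine, provided one stays careful that all the maps live only on suitable open sets, as already flagged in the Note on the domains of $\Psi$ and $\Phi$.
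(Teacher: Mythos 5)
Your proposal is correct and follows essentially the same route as the paper, whose ``proof'' is precisely the discussion preceding the Proposition: the Implicit Function Theorem produces $\Psi$, equation (\ref{Psifi}) gives tangency, horizontality is the uniqueness of $x_{(0)}$ over each $(t,x_{(1)},\ldots,x_{(m)})$, and the converse recovers $x_{(0)}(t)$ as the unique curve lying on the leaf through the initial data. Your write-up merely makes the converse more explicit (identifying integral curves of $\widetilde Y_\alpha$ with $(m+1)$-tuples of solutions and combining tangency with horizontal uniqueness), which is a welcome but not substantively different elaboration.
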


\section{Generalized Lie Theorem.}\label{GLT}

It is generally difficult to determine whether a family (\ref{family}) admits a common time-dependent
superposition rule by means of Proposition \ref{Connection}. It is therefore interesting to find a
characterization of Lie families by means of a more convenient criterion, e.g. through an easily verifiable
condition based on the properties of the time-dependent vector fields $\{Y_\alpha\}_{\alpha\in\Lambda}$.
Finding such a criterion is the main result of this section. It is formulated as Generalized Lie Theorem.

\noindent We start with three lemmata. The proofs of first two of them are straightforward.
\begin{lemma}\label{PureProl} Given two time-dependent vector fields $X$ and $Y$ on $\mathbb{R}^n$,
the commutator $[\widetilde X,\widetilde Y]$ on $\mathbb{R}\times\mathbb{R}^{n(m+1)}$ is the prolongation
of a time-dependent vector field $Z$ on $\mathbb{R}^n$, $[\widetilde X,\widetilde Y]=\widehat Z$.

\end{lemma}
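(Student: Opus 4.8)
The plan is to compute the commutator directly, exploiting the natural splitting of each time-prolongation into its time direction and its pure prolongation. Writing $\widetilde X=\partial/\partial t+\widehat X$ and $\widetilde Y=\partial/\partial t+\widehat Y$ and using bilinearity of the Lie bracket, I obtain
$$[\widetilde X,\widetilde Y]=[\partial/\partial t,\widehat Y]+[\widehat X,\partial/\partial t]+[\widehat X,\widehat Y].$$
The first thing to verify is that the $\partial/\partial t$ component of the result vanishes: applying $[\widetilde X,\widetilde Y]$ to the coordinate function $t$ gives zero, since $\widetilde X t=\widetilde Y t=1$ and both fields annihilate constants. Hence $[\widetilde X,\widetilde Y]$ is tangent to the fibres of the time projection and has no $\partial/\partial t$ part, which is exactly the form required for the conclusion $\widehat Z$.

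Next I would handle the two kinds of surviving terms separately. For the terms bracketed with $\partial/\partial t$, a short computation shows that $[\partial/\partial t,\widehat Y]$ simply differentiates the coefficient functions with respect to $t$ (the mixed second derivatives cancel), yielding $\sum_{a,i}(\partial Y^i/\partial t)(t,x_{(a)})\,\partial/\partial x^i_{(a)}$, which is precisely the prolongation of the time-dependent vector field on $\mathbb{R}^n$ with components $\partial Y^i/\partial t$. The same computation gives $[\widehat X,\partial/\partial t]=-\sum_{a,i}(\partial X^i/\partial t)(t,x_{(a)})\,\partial/\partial x^i_{(a)}$, so this pair assembles into the prolongation of the time-dependent vector field with components $\partial Y^i/\partial t-\partial X^i/\partial t$.

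For $[\widehat X,\widehat Y]$ the key structural point is that the prolongation decouples over the $(m+1)$ copies. Writing $\widehat X=\sum_{a=0}^m X^{(a)}$ with $X^{(a)}=\sum_i X^i(t,x_{(a)})\,\partial/\partial x^i_{(a)}$ acting only on the coordinates of the $a$-th copy, the fields attached to distinct copies commute, so $[\widehat X,\widehat Y]=\sum_{a=0}^m[X^{(a)},Y^{(a)}]$; and each $[X^{(a)},Y^{(a)}]$ is the $a$-th copy of the ordinary Lie bracket of $X$ and $Y$ on $\mathbb{R}^n$ computed with $t$ held fixed as a parameter. Thus $[\widehat X,\widehat Y]=\widehat{[X,Y]}$, the prolongation of the spatial bracket $[X,Y]$.

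Finally, since the prolongation map $W\mapsto\widehat W$ is manifestly linear in $W$, adding the two contributions gives $[\widetilde X,\widetilde Y]=\widehat Z$ with the single time-dependent vector field $Z=\partial Y/\partial t-\partial X/\partial t+[X,Y]$ on $\mathbb{R}^n$. I do not expect any genuine obstacle here; the result is a bookkeeping computation. The only two points that really need care are the cancellation of the $\partial/\partial t$ components and the vanishing of the cross-copy brackets in $[\widehat X,\widehat Y]$, both of which follow immediately from the definition of the prolongation.
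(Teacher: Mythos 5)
Your proof is correct and complete: the paper omits the proof of this lemma as ``straightforward,'' and your direct computation --- splitting $\widetilde X=\partial/\partial t+\widehat X$, noting the cancellation of the $\partial/\partial t$ components, and using that prolongations decouple over the $m+1$ copies so that $[\widehat X,\widehat Y]=\widehat{[X,Y]}$ --- is exactly the intended argument, with the bonus of an explicit formula $Z=\partial Y/\partial t-\partial X/\partial t+[X,Y]$.
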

\begin{lemma}\label{Aut} Given a family of time-dependent vector fields, $X_1,\ldots,X_r$,  on $\mathbb{R}^n$,
their autonomizations satisfy the relations
$$
[\bar X_j, \bar X_k](t,x)=\sum_{l=1}^rf_{jkl}(t)\bar X_l(t,x),\qquad j,k=1,\ldots,r,
$$
for some time-dependent functions $f_{jkl}:\mathbb{R}\rightarrow\mathbb{R}$, if and only if their
time-prolongations to $\mathbb{R}\times\mathbb{R}^{n(m+1)}$, $\widetilde X_1,\ldots,\widetilde X_r$, satisfy
analogous relations
$$
[\widetilde X_j, \widetilde X_k](t,x)=\sum_{l=1}^rf_{jkl}(t)\widetilde X_l(t,x),\qquad j, k=1,\ldots,r.
$$
Moreover, $\sum_{l=1}^rf_{jkl}(t)=0$ for all $j, k=1,\ldots,r$.
\end{lemma}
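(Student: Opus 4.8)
The plan is to compute the two commutators explicitly and to show that, once the $\partial/\partial t$-component is separated off, each family of relations reduces to one and the same system of equations on the functions $f_{jkl}$. The key simplifying remark is that $\widetilde X_l=\partial/\partial t+\widehat X_l$, where $\widehat X_l$ is the prolongation (\ref{prolongation}), and that the autonomization $\bar X_l$ is exactly the time-prolongation $\widetilde X_l$ in the degenerate case $m=0$ (a single copy of $\mathbb{R}^n$). Hence a single bracket computation, read at general $m$ and at $m=0$, covers both halves of the statement.

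First I would compute $[\widetilde X_j,\widetilde X_k]$. Since the coordinates of distinct copies $x_{(a)}$ are independent, the vertical parts supported on different copies commute; meanwhile the interaction of $\partial/\partial t$ with the explicit time-dependence of the coefficients $X_l^i(t,x_{(a)})$ produces the terms $\partial X_l/\partial t$. This yields
$$
[\widetilde X_j,\widetilde X_k]=\widehat Z_{jk},\qquad Z_{jk}:=[[X_j,X_k]]+\pd{X_k}{t}-\pd{X_j}{t},
$$
where $[[\cdot,\cdot]]$ denotes the bracket computed with $t$ frozen. In particular the right-hand side has no $\partial/\partial t$-component, consistently with Lemma \ref{PureProl}. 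Setting $m=0$ in the very same computation gives $[\bar X_j,\bar X_k]=Z_{jk}$, the identical time-dependent field now read on the single copy.

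The last step is to match components. Expanding, $\sum_l f_{jkl}(t)\widetilde X_l=\big(\sum_l f_{jkl}(t)\big)\partial/\partial t+\widehat{\sum_l f_{jkl}(t)X_l}$. Comparing this with $[\widetilde X_j,\widetilde X_k]=\widehat Z_{jk}$, whose left-hand side carries no $\partial/\partial t$-part, forces simultaneously $\sum_l f_{jkl}(t)=0$ — which is precisely the \emph{Moreover} claim — together with $Z_{jk}=\sum_l f_{jkl}(t)X_l$, the latter read off by restricting the prolongations to any one copy (the map $X\mapsto\widehat X$ being injective). The autonomization relation $[\bar X_j,\bar X_k]=\sum_l f_{jkl}(t)\bar X_l$ produces, by the identical bookkeeping, exactly these same two equations. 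Since both relations are equivalent to the common pair $\{\,\sum_l f_{jkl}(t)=0,\ Z_{jk}=\sum_l f_{jkl}(t)X_l\,\}$, they are equivalent to each other, with the same $f_{jkl}$, and each of them entails $\sum_l f_{jkl}(t)=0$.

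I do not anticipate a genuine obstacle: the argument is elementary bookkeeping, as the paper's remark that the proof is ``straightforward'' already signals. The only points demanding care are tracking the cross-terms between $\partial/\partial t$ and the $t$-dependent coefficients, and observing that the equation governing the spatial part, $Z_{jk}=\sum_l f_{jkl}(t)X_l$, is copy-independent and therefore literally the same for every $m$ (including $m=0$). It is exactly this copy-independence that makes the autonomized and time-prolonged relations coincide uniformly in $m$.
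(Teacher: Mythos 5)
Your argument is correct: the identity $[\widetilde X_j,\widetilde X_k]=\widehat Z_{jk}$ with $Z_{jk}=[[X_j,X_k]]+\partial X_k/\partial t-\partial X_j/\partial t$, the observation that $\bar X_l$ is the $m=0$ case of $\widetilde X_l$, and the separation of the $\partial/\partial t$-component (forcing $\sum_l f_{jkl}=0$) reduce both relations to the same copy-independent pair of equations. The paper omits this proof as ``straightforward,'' and your computation is precisely the intended direct verification, so there is nothing to compare beyond noting agreement.
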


\begin{lemma}\label{Prol} Consider a family of time-dependent vector fields, $Y_1,\ldots,Y_r$, with time prolongations
to $\mathbb{R}\times\mathbb{R}^{n(m+1)}$, $\widetilde Y_1,\ldots,\widetilde Y_r$, such that their projections
$\pi_*(\widetilde Y_j)$ are linearly independent at a generic point in $\mathbb{R}\times\mathbb{R}^{nm}$.
Then, $\sum_{j=1}^r b_j\widetilde Y_j$, with $b_j\in C^{\infty}(\mathbb{R}\times\mathbb{R}^{nm})$, is of the form $\widehat Y$ (resp. $\widetilde Y$) for a time-dependent vector field $Y$ on
$\mathbb{R}^n$, if and only if the functions $b_j$ depend on the time only, that is, $b_j=b_j(t)$, and
$\sum_{j=1}^rb_j=0$ (resp., $\sum_{j=1}^rb_j=1$).
\end{lemma}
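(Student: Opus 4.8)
The plan is to treat the two directions separately, the ``if'' part being a direct computation and the ``only if'' part relying on the linear-independence hypothesis through a differentiation argument. Throughout I use $\widetilde Y_j=\pd{}{t}+\widehat Y_j$ and the fact that each $b_j$ is a function of $(t,x_{(1)},\ldots,x_{(m)})$ only.

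For the ``if'' direction, I would assume $b_j=b_j(t)$ and $\sum_j b_j=0$ (resp.\ $=1$) and simply exhibit the vector field $Y$. Collecting terms gives
$$\sum_{j=1}^r b_j(t)\widetilde Y_j=\Big(\sum_{j=1}^r b_j\Big)\pd{}{t}+\sum_{a=0}^m\sum_{i=1}^n\Big(\sum_{j=1}^r b_j(t)Y_j^i(t,x_{(a)})\Big)\pd{}{x^i_{(a)}}.$$
Setting $Y^i(t,x):=\sum_j b_j(t)Y_j^i(t,x)$, the spatial part is exactly $\widehat Y$, while the constraint $\sum_j b_j=0$ (resp.\ $=1$) annihilates (resp.\ normalizes to $1$) the coefficient of $\pd{}{t}$, so the sum equals $\widehat Y$ (resp.\ $\widetilde Y$). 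This direction uses neither linear independence nor genericity.

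For the converse, I would suppose $\sum_j b_j\widetilde Y_j=\widehat Y$ (resp.\ $\widetilde Y$). Comparing the coefficients of $\pd{}{t}$ immediately yields $\sum_j b_j=0$ (resp.\ $=1$), so $\sum_j b_j$ is constant. Comparing the coefficients of $\pd{}{x^i_{(0)}}$ in the block $a=0$ then gives, for every $x_{(0)}$,
$$\sum_{j=1}^r b_j(t,x_{(1)},\ldots,x_{(m)})\,Y_j^i(t,x_{(0)})=Y^i(t,x_{(0)}),\qquad i=1,\ldots,n,$$
whose right-hand side is independent of $x_{(1)},\ldots,x_{(m)}$. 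The key step will be to differentiate this identity with respect to a spatial variable $x^k_{(b)}$ with $b\in\{1,\ldots,m\}$: the right-hand side dies, leaving $\sum_j (\pd{b_j}{x^k_{(b)}})\,Y_j^i(t,x_{(0)})=0$ for all $x_{(0)}$ and $i$, while differentiating $\sum_j b_j=\text{const}$ gives $\sum_j \pd{b_j}{x^k_{(b)}}=0$.

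Writing $d_j:=\pd{b_j}{x^k_{(b)}}$ and evaluating the previous relation at $x_{(0)}=x_{(a)}$ for each $a=1,\ldots,m$, I obtain $\sum_j d_j=0$ together with $\sum_j d_j\,Y_j^i(t,x_{(a)})=0$; these are precisely the components of $\sum_j d_j\,\pi_*(\widetilde Y_j)=0$ at the point $(t,x_{(1)},\ldots,x_{(m)})$. Since at a generic point the projections $\pi_*(\widetilde Y_j)$ are linearly independent by hypothesis, all $d_j$ vanish on an open dense set, hence everywhere by continuity, so each $b_j$ is independent of every spatial variable and depends on $t$ alone; together with $\sum_j b_j=0$ (resp.\ $=1$) this is the claim. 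I expect the main obstacle to be engineering the application of the linear-independence hypothesis: the block $a=0$ carries the variable $x_{(0)}$ on which the $b_j$ do not depend, and the differentiation trick is exactly what converts the coupled identity into a vanishing linear combination of the $\pi_*(\widetilde Y_j)$ to which the hypothesis applies.
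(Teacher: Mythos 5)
Your proof is correct, but your converse direction takes a genuinely different route from the paper's. The paper assembles the coefficient-matching conditions for the blocks $a=1,\ldots,m$ together with the $\partial/\partial t$ condition into an overdetermined linear system of $m\cdot n+1$ equations in unknowns $u_j$; linear independence of the $\pi_*(\widetilde Y_j)$ forces the solution to be uniquely determined by $(t,x_{(1)},\ldots,x_{(m)})$, hence independent of $x_{(0)}$, and the $S_{m+1}$-equivariance of time-prolongations is then invoked to rule out dependence on the remaining blocks. You instead differentiate the single block-$0$ identity $\sum_j b_j\,Y^i_j(t,x_{(0)})=Y^i(t,x_{(0)})$ in a variable $x^k_{(b)}$ with $b\geq 1$ and recognize the resulting relations, evaluated at $x_{(0)}=x_{(a)}$ for $a=1,\ldots,m$ and combined with $\sum_j\partial b_j/\partial x^k_{(b)}=0$, as precisely the vanishing of $\sum_j\left(\partial b_j/\partial x^k_{(b)}\right)\pi_*(\widetilde Y_j)$, to which the linear-independence hypothesis applies directly. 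Your route is more self-contained: it avoids the permutation-symmetry step, which the paper asserts rather than argues. The trade-off is scope: you take the hypothesis $b_j\in C^{\infty}(\mathbb{R}\times\mathbb{R}^{nm})$ literally, whereas the paper's own proof (and its application inside Theorem \ref{MT}) allows $b_j$ to depend a priori on $x_{(0)}$ as well. Your differentiation trick covers that case with one extra step --- first differentiate a block-$a$ identity with $a\geq 1$ in $x^k_{(0)}$ to obtain $\sum_j\left(\partial b_j/\partial x^k_{(0)}\right)\pi_*(\widetilde Y_j)=0$ and kill the $x_{(0)}$-dependence --- but you would need to state this to use the lemma in the generality the paper requires.
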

\begin{proof}
We shall only detail the proof of the above claim for $\sum_{j=1}^r b_j\widetilde Y_j=\widehat Y$, as the proof of the other case is completely analogous. Let us write in coordinates
\begin{equation*}
    \widetilde Y_j=\frac{\partial}{\partial t}+\sum_{i=1}^n\sum_{a=0}^mA^i_j(t,x_{(a)})\pd{}{x^i_{(a)}},\qquad j=1,\ldots, r.
 \end{equation*}
Then, {\small
\begin{multline*}
\sum_{j=1}^rb_j(t,x_{(0)},\ldots,x_{(m)})\widetilde Y_j=\sum_{j=1}^r\sum_{i=1}^n\sum_{a=0}^mb_j(t,x_{(0)},
\ldots,x_{(m)})A^i_j(t,x_{(a)})\pd{}{x^i_{(a)}}\\
+\sum_{j=1}^r b_j(t,x_{(0)},\ldots,x_{(m)})\pd{}{t},
\end{multline*}}which is a prolongation if and only if there are functions $B^i:\mathbb{R}\times\mathbb{R}^n\rightarrow\mathbb{R}$,
with $i=1,\ldots,n$, such that
{\small
\begin{equation*}
\left\{\begin{aligned}
&\sum_{j=1}^r b_j(t,x_{(0)},\ldots,x_{(m)})A^i_j(t,x_{(a)})=B^i(t,x_{(a)}),\\
&\sum_{j=1}^r b_j(t,x_{(0)},\ldots,x_{(m)})=0,
\end{aligned}\right.\qquad a=0,\ldots,m, \quad i=1,\ldots,n.
\end{equation*}}If the functions $b_1,\ldots,b_r$ are time-dependent only and
$\sum_{j=1}^r b_j=0$, the above conditions hold and $\sum_{j=1}^r b_j \widetilde Y_j$ is the prolongation to
$\mathbb{R}\times\mathbb{R}^{n(m+1)}$ of the time-dependent vector field
$Y=\sum_{i=1}^nB^i(t,x)\partial/\partial x^i.$

Conversely, suppose that $\sum_{j=1}^r b_j \widetilde Y_j$ is a prolongation for a time-dependent vector field
on $\mathbb{R}^n$.  In this case, the functions $b_j(t,x_{(0)},\ldots,x_{(m)})$ solve the following system of
linear equations in the unknown variables $u_\alpha$:
\begin{equation*}
\left\{\begin{aligned}
&\sum_{j=1}^r u_j A^i_j(t,x_{(a)})=B^i(t,x_{(a)}),\\
&\sum_{j=1}^r u_j=0,
\end{aligned}\right.
\end{equation*}
where $a=1,\ldots,m$ and $i=1,\ldots,n$. This is a system of $m\cdot n+1$ equations and, as $\pi_*(\widetilde
Y_j)$, with $j=1,\ldots,r$, are linearly independent by assumption, the solutions $u_\alpha$ are uniquely
determined by the variables $\{t, x_{(1)},\ldots,x_{(m)}\}$ and therefore they do not depend on $x_{(0)}$.
Since time-prolongations are invariant with respect to the symmetry group $S_{m+1}$ acting on
$\mathbb{R}^{n(m+1)}=\left(\mathbb{R}^n\right)^{m+1}$ in the obvious way, the functions
$b_j(t,x_{(0)},\ldots,x_{(m)})$, with $j=1,\ldots,r$, must satisfy such a symmetry. Hence, as they do not
depend on $x_{(0)}$, they cannot depend on the variables $\{x_{(1)},\ldots,x_{(m)}\}$ and they are functions
depending on the time only.
\end{proof}

\begin{theorem}{\bf (Generalized Lie Theorem)} \label{MT}
The family of systems (\ref{eq1}) admits a {\it common time-dependent superposition rule} if and only if the
vector fields $\{\bar Y_\alpha\}_{\alpha\in\Lambda}$ can be written in the form
\begin{equation}\label{MainDesc}
\bar Y_\alpha(t,x)=\sum_{j=1}^rb_{\alpha j}(t)\bar X_j(t,x),\qquad \alpha\in\Lambda,
\end{equation}
where $b_{\alpha j}$ are functions of the time only, $\sum_{j=1}^nb_{\alpha j}=1$, and, $X_1,\ldots, X_r$, are
time-dependent vector fields such that
\begin{equation}\label{condition}
[\bar X_j,\bar X_k](t,x)=\sum_{l=1}^rf_{jkl}(t)\bar X_l(t,x),\qquad j, k=1,\ldots,r.
\end{equation}
for some functions $f_{jkl}:\mathbb{R}\rightarrow\mathbb{R}$, with $j,k,l=1,\ldots,r$. We call the family of
autonomizations, $\bar X_1,\ldots,\bar X_r$, a {\sl system of generators of the Lie family}.
\end{theorem}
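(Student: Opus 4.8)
The plan is to prove both implications by transferring everything to the time-prolongations on $\mathbb{R}\times\mathbb{R}^{n(m+1)}$ and then invoking Proposition \ref{Connection} to pass between superposition rules and horizontal foliations. I will use throughout the correspondence, implicit in Lemma \ref{Aut}, between the autonomization relation (\ref{MainDesc}) and the analogous time-prolongation relation $\widetilde Y_\alpha=\sum_j b_{\alpha j}(t)\widetilde X_j$: comparing the $\partial/\partial t$ components shows these are equivalent precisely because $\sum_j b_{\alpha j}=1$, and Lemma \ref{Aut} turns the bracket relation (\ref{condition}) into $[\widetilde X_j,\widetilde X_k]=\sum_l f_{jkl}(t)\widetilde X_l$ with $\sum_l f_{jkl}(t)=0$.

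For sufficiency $(\Leftarrow)$, assume (\ref{MainDesc})--(\ref{condition}). By the above, the distribution $\mathcal{D}=\langle \widetilde X_1,\dots,\widetilde X_r\rangle$ is involutive, since the Leibniz rule applied to two sections $\sum_j g_j\widetilde X_j$ and $\sum_k h_k\widetilde X_k$ produces only terms lying in $\mathcal{D}$ once each $[\widetilde X_j,\widetilde X_k]$ is replaced by $\sum_l f_{jkl}(t)\widetilde X_l$; moreover every $\widetilde Y_\alpha$ lies in $\mathcal{D}$. I would first discard superfluous generators so that the $\bar X_j$ are independent over the functions of $t$, and then choose the number $m$ of particular solutions so that $\pi_*(\widetilde X_1),\dots,\pi_*(\widetilde X_r)$ are linearly independent at a generic point and $\mathcal{D}$ has generic rank $1+nm$. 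For such $m$ the projection $\pi$ restricts to a local diffeomorphism of each integral leaf onto $\mathbb{R}\times\mathbb{R}^{nm}$, so the foliation $\mathfrak{F}$ obtained from $\mathcal{D}$ by the Frobenius theorem is horizontal of codimension $n$ and has all $\widetilde Y_\alpha$ tangent to its leaves; Proposition \ref{Connection} then yields a common time-dependent superposition rule.

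For necessity $(\Rightarrow)$, a common time-dependent superposition rule provides, by Proposition \ref{Connection}, a horizontal foliation $\mathfrak{F}$ of codimension $n$ to whose leaves all $\widetilde Y_\alpha$ are tangent, so $T\mathfrak{F}$ is involutive of rank $1+nm$. I would fix one index $\alpha_0$ and introduce, for each $t$, the set $\mathfrak{g}(t)$ of vector fields $W$ on $\mathbb{R}^n$ whose pure prolongation $\widehat W$ is tangent to $\mathfrak{F}$. Since $\widetilde Y_\alpha-\widetilde Y_{\alpha_0}=\widehat{Y_\alpha-Y_{\alpha_0}}$ is tangent, each $Y_\alpha-Y_{\alpha_0}$ lies in $\mathfrak{g}(t)$, and by Lemma \ref{PureProl} together with involutivity the brackets of such prolongations are again pure prolongations tangent to $\mathfrak{F}$, so $\mathfrak{g}(t)$ is a Lie algebra. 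Horizontality forces the evaluation $W\mapsto(W(x_{(1)}),\dots,W(x_{(m)}))$ to be injective on $\mathfrak{g}(t)$ at a generic point, whence $\dim\mathfrak{g}(t)\le nm$ is finite. Choosing a basis $X_1,\dots,X_d$ of $\mathfrak{g}(t)$ and setting $\bar X_0=\partial_t+Y_{\alpha_0}$, $\bar X_l=\partial_t+Y_{\alpha_0}+X_l$ gives a finite system of generators; writing $\widetilde Y_\alpha=\sum_j b_{\alpha j}\widetilde X_j$ I would invoke Lemma \ref{Prol} to conclude that the coefficients depend on $t$ only and satisfy $\sum_j b_{\alpha j}=1$, while (\ref{condition}) follows because each bracket $[\bar X_j,\bar X_k]$ again lands in $\mathfrak{g}(t)$.

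The main obstacle in both directions is the bookkeeping around codimension and horizontality. In $(\Leftarrow)$ the delicate point is guaranteeing that $m$ can be chosen so that $\mathcal{D}$ has generic rank exactly $1+nm$, equivalently that its integral foliation is genuinely horizontal rather than merely of some larger codimension; in $(\Rightarrow)$ the delicate point is the finite-dimensionality of $\mathfrak{g}(t)$ and the smooth-in-$t$ selection of its basis. Both rest on the genericity hypotheses built into the definition of a superposition rule, and Lemma \ref{Prol} is precisely the device that excludes any spurious dependence of the $b_{\alpha j}$ on the spatial variables, which is what makes the decomposition (\ref{MainDesc}) one by functions of the time alone.
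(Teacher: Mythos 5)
Your overall architecture (reduce everything to the time-prolongations, use Lemmata \ref{PureProl}--\ref{Prol} to force the coefficients to depend on $t$ only, and pass to foliations via Proposition \ref{Connection}) matches the paper's, but the sufficiency direction contains a genuine gap that your own closing paragraph flags without resolving. You propose to choose $m$ so that $\mathcal{D}=\langle\widetilde X_1,\dots,\widetilde X_r\rangle$ has generic rank exactly $1+nm$. This is impossible in general: the rank of $\mathcal{D}$ is bounded by $r$ for every $m$, while $1+nm$ grows with $m$, and $r$ need not be of the form $1+nm$ for any admissible $m$. The paper's own Milne--Pinney example is a counterexample ($r=4$, $n=2$, so $1+2m=4$ has no integer solution; with $m=2$ the distribution has rank $4$ and codimension $3>n=2$). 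The correct step, which your argument is missing, is to take $m$ large enough that $\mathrm{rank}\,\mathcal{D}\leq 1+nm$ (so the Frobenius foliation $\mathfrak{F}_0$ has codimension at least $n$ and projects injectively under $\pi$), and then \emph{enlarge} $\mathfrak{F}_0$ to a foliation of codimension exactly $n$ that is still horizontal; only this enlarged foliation feeds into Proposition \ref{Connection}. Without the enlargement your sufficiency proof does not produce a superposition rule for any family whose generated distribution has rank different from $1+nm$ for all $m$.

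Your necessity argument takes a genuinely different route from the paper's: instead of generating the involutive distribution by the iterated brackets of the $\widetilde Y_\alpha$ and extracting a finite basis of time-prolongations near a regular point (which is automatically smooth), you introduce the space $\mathfrak{g}(t)$ of vector fields whose pure prolongations are tangent to $\mathfrak{F}$ and bound its dimension by $nm$ via horizontality. The dimension bound is a nice observation (it is essentially the classical Lie-theorem counting argument), but as written $\mathfrak{g}(t)$ mixes time-dependent and time-independent objects: the relevant structure is a $C^{\infty}(\mathbb{R})$-module of time-dependent vector fields, and a pointwise-in-$t$ linear-algebra argument does not by itself produce a single smooth basis $X_1,\dots,X_d$ valid on an interval, nor does it guarantee the hypothesis of Lemma \ref{Prol} (linear independence of the $\pi_*(\widetilde X_j)$) for the generators you build. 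You acknowledge this as a delicate point but leave it open; the paper's construction sidesteps it entirely. Both issues would need to be repaired before the proof is complete.
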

\begin{proof}

Suppose first that the family of systems (\ref{eq1}) admits a {\it common time-dependent superposition rule}
and let $\mathfrak{F}$ be the corresponding $n$-codimensional horizontal foliation. The vector fields
$\{\widetilde Y_\alpha\}_{\alpha\in\Lambda}$ are tangent to the leaves of the foliation $\mathfrak{F}$ and
span a distribution $\mathcal{D}_0$ on $\mathbb{R}\times\mathbb{R}^{n(m+1)}$. Such a distribution need not be
involutive, see examples in Section \ref{Examples}. Nevertheless, we can enlarge the family $\{\widetilde
Y_\alpha\}_{\alpha\in\Lambda}$ to the Lie algebra of vector fields generated by such a family. This Lie
algebra is spanned by $\{\widetilde Y_\alpha\}_{\alpha\in\Lambda}$ and all their possible Lie brackets, i.e.,
\begin{equation}\label{family}\widetilde Y_\alpha, \,\, [\widetilde Y_\alpha,\widetilde Y_\beta],\,\,
[\widetilde Y_\alpha,[\widetilde Y_\beta,\widetilde Y_\gamma]],\,\,[\widetilde Y_\alpha,[\widetilde Y_\beta,
[\widetilde Y_\gamma,\widetilde Y_\delta]]],\ldots\,\,\qquad\qquad \alpha,\beta,\gamma,\delta,\ldots\in \Lambda.
\end{equation}
All the above vector fields are tangent to the leaves of the foliation $\mathfrak{F}$ and therefore there are
up to $m\cdot n+1$ linearly independent ones at a generic point of $\mathbb{R}\times\mathbb{R}^{n(m+1)}$.
Consequently, they span an involutive generalized distribution $\mathcal{D}$ with leaves of dimension $r\leq
m\cdot n+1$. In a neighborhood of a regular point of this foliation, take now a finite basis of vector fields
from the elements of the family (\ref{family}) spanning the distribution. By construction, at least one of
them must be of the form $\widetilde X_1$ for a certain time-dependent vector field $X_1$ on $\mathbb{R}^n$
and, in view of lemma \ref{Prol} and the form of the family (\ref{family}), those not being time-prolongations
are just prolongations. Therefore, if we add $\widetilde X_1$ to those elements of the basis being
prolongations, we get a new basis of the distribution $\mathcal{D}$ made up by certain $r\leq m\cdot n+1$
time-prolongations $\widetilde X_1,\ldots,\widetilde X_r$. In other words, the distribution $\mathcal{D}$ is
locally spanned, near regular points, by time-prolongations, say $\widetilde X_1,\ldots,\widetilde X_r$. As
the generalized distribution $\mathcal{D}$ is involutive, there exist $r^3$ real functions $f_{jkl}$, with
$j,k,l=1,\ldots,r,$ on $\mathbb{R}\times\mathbb{R}^{n(m+1)}$ such that
$$
[\widetilde X_j,\widetilde X_k]=\sum_{l=1}^rf_{jkl}\widetilde X_l, \qquad j,k=1,\ldots,r,
$$
and as the left side of the above equalities are prolongations, we get that, in view of lemma \ref{Prol}, all
the functions $f_{jkl}$ depend on time only and $\sum_{l=1}^nf_{jkl}=0$. Finally, taking into account Lemma
\ref{Aut}, we have
$$
[\bar X_j,\bar X_k](t,x)=\sum_{l=1}^rf_{jkl}(t)\bar X_l(t,x), \qquad j,k=1,\ldots,r.
$$
Note that as the vector fields $\{\widetilde Y_\alpha\}_{\alpha\in\Lambda}$ are contained in the distribution $\mathcal{D}$, there exist some functions $b_{\alpha j}\in C^{\infty}(\mathbb{R}^{n(m+1)})$ such that $\widetilde Y_{\alpha}=\sum_{j=1}^rb_{\alpha j}\widetilde X_j$ for every $\alpha\in\Lambda$. In consequence, according again to lemma \ref{Prol}, the functions $b_{\alpha j}$ depend on the time only, i.e., $b_{\alpha j}=b_{\alpha j}(t)$. Therefore, we get that
$$
\widetilde Y_\alpha=\sum_{j=1}^rb_{\alpha j}\widetilde X_j\Longrightarrow\bar Y_\alpha(t,x)=\sum_{j=1}^rb_{\alpha j}(t)\bar X_j(t,x),\qquad \alpha\in\Lambda.
$$

Let us prove the converse. Assume that we can write
$$
\bar Y_\alpha(t,x)=\sum_{j=1}^rb_{\alpha j}(t) \bar X_j(t,x)
$$
for certain time-dependent vector fields $X_1,\ldots,X_r$ on $\mathbb{R}^n$ such that
$$
[\bar X_j,\bar X_k](t,x)=\sum_{l=1}^rf_{jkl}(t)\bar X_l(t,x), \qquad j,k=1,\ldots,r.$$ In view of lemma
\ref{Aut}, the vector fields $\widetilde X_1,\ldots, \widetilde X_r$ span an involutive distribution
$\mathcal{D}$ on $\mathbb{R}\times\mathbb{R}^{n(m+1)}$ for any $m$. Furthermore, the rank of this distribution
is not greater than $r$ and therefore, for $m$ big enough, this distribution is at least $n$-codimensional and
it gives rise to a foliation $\mathfrak{F}_0$ which is horizontal with respect to the projection $\pi$.
Moreover, if codimension of $\mathfrak{F}_0$ is bigger than $n$, we can enlarge $\mathfrak{F}_0$ to a
$n$-codimensional foliation $\mathfrak{F}$, still horizontal with respect to the map $\pi$ giving rise to a
common time-dependent superposition rule for the family (\ref{eq1}).
\end{proof}

\section{Lie families, quasi-Lie and Lie systems}\label{QLSandLS}

This section is devoted to recalling the theories of quasi-Lie schemes and Lie systems needed to investigate
the relations among these theories and Lie families. A full detailed report on these topics can be found in
\cite{CGM07,CGL08}.

The theory of quasi-Lie schemes provides various results on the transformation properties of time-dependent
vector fields by a certain kind of time-dependent changes of variables associated with generalized flows.

Each time-dependent vector field $X$ gives rise to a {\it generalized flow} $g^X$, i.e., a map
$g^X:(t,x)\in\mathbb{R}\times \mathbb{R}^n \rightarrow  g^X_t(x)\equiv g^X(t,x)\in \mathbb{R}^n$ (more
precisely, defined in a neighborhood of $\{ 0\}\times\mathbb{R}^n$ in $\mathbb{R}\times \mathbb{R}^n$), with
$g^X_0={\rm Id}_{\mathbb{R}^n}$, such that the curve $\gamma^X_{x_0}(t)=g^X_t(x_0)$ is the integral curve of
the time-dependent vector field $X$ starting from the point $x_0\in\mathbb{R}^n$, i.e., $\dot
\gamma^X_{x_0}=X(t,\gamma^X_{x_0}(t))$ and $\gamma^X_{x_0}(0)=g^X_0(x_0)=x_0$.

Denote with $\mathfrak{X}_t(\mathbb{R}^n)$ the set of all time-dependent vector fields on $\mathbb{R}^n$. Each
generalized flow $h$ acts on the set of time-dependent vector fields $\mathfrak{X}_t(\mathbb{R}^n)$
transforming each time-dependent vector field $X\in\mathfrak{X}_t(\mathbb{R}^n)$ into a new one, $h_\di X$, with
the generalized flow of the form $g^{h_\di X}=h\circ g^X$. In terms of autonomizations we can write
(\cite[Theorem 3]{CGL08})
$$\overline{h_\di X}=\bar{h}_*\bar{X}\,,$$
where $\bar{h}$ is the natural {\it autonomization} of the generalized flow $h$ to a (local) diffeomorphism of
$\mathbb{R}\times \mathbb{R}^n$, $\bar{h}(t,x)=(t,h_t(x))$ and where $\bar{h}_*$ is the standard action of the
diffeomorphism $\bar{h}$ on vector fields. This, in turn, implies that
\begin{equation}\label{aut}
[\overline{h_\di X},\overline{h_\di Y}]=\bar{h}_*[\bar{X},\bar{Y}]\,.
\end{equation}

Let $V$ be a finite-dimensional vector space of vector fields on $\mathbb{R}^n$. We denote with
$V(\mathbb{R})$ the set of time-dependent vector fields $X\in\mathfrak{X}_t(\mathbb{R}^n)$ with values in $V$,
that is, those time-dependent vector fields $X$ such that, for every $t\in\mathbb{R}$, the vector field
$X_t(x)$ belongs to $V$. In terms of the introduced terminology and notation, Lie Theorem, whose  statement can be found for instance in \cite{LS,CGM07}, can be reformulated
as follows.

\begin{proposition} {\bf (Lie Theorem)} A non-autonomous system $X$ is a Lie system on $\mathbb{R}^n$ if and only
if there exists a finite-dimensional Lie algebra of vector fields $V_0\subset \mathfrak{X}(\mathbb{R}^n)$ such that $X\in V_0(\mathbb{R})$.
\end{proposition}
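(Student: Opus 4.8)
The plan is to re-run the argument of Theorem~\ref{MT}, but with the \emph{prolongations} $\widehat{(\cdot)}$ in place of the time-prolongations $\widetilde{(\cdot)}$, the point being that a Lie system carries a \emph{time-independent} superposition rule, so the ambient foliation lives on $\mathbb{R}^{n(m+1)}$ rather than on $\mathbb{R}\times\mathbb{R}^{n(m+1)}$, and the structure functions that emerge will be genuine constants rather than functions of $t$. First I would record the time-independent analogue of Proposition~\ref{Connection}: a time-independent superposition rule for $X$ is the same thing as a codimension-$n$ foliation $\mathfrak{F}$ of $\mathbb{R}^{n(m+1)}$, horizontal with respect to the projection $\pi:\mathbb{R}^{n(m+1)}\to\mathbb{R}^{nm}$ forgetting $x_{(0)}$, to whose leaves the prolongation of the frozen field $x\mapsto X(t,x)$ is tangent for every value of $t$. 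This is obtained exactly as in Section~\ref{TDS}: differentiating the constancy of $\Psi(x_{(0)}(t),\ldots,x_{(m)}(t))$ along $(m+1)$-tuples of solutions now yields $\widehat{X}_t\Psi^j=0$, with no $\partial/\partial t$ term since $\Psi$ does not depend on $t$.

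For the direct implication I would take the family $\{\widehat{X}_t\}_{t\in\mathbb{R}}$ and close it under Lie brackets. Because prolongation is a Lie-algebra homomorphism, $[\widehat{X},\widehat{Y}]=\widehat{[X,Y]}$ (the purely spatial counterpart of Lemma~\ref{PureProl}), every iterated bracket is again a prolongation, and all of them are tangent to the leaves of $\mathfrak{F}$, which have dimension $nm$. Hence the generated distribution $\mathcal{D}$ is involutive of rank $r\le nm$; near a regular point I would pick a basis $\widehat{X}_1,\ldots,\widehat{X}_r$ of $\mathcal{D}$ consisting of prolongations. Writing each $\widehat{X}_t$ and each bracket $[\widehat{X}_j,\widehat{X}_k]$ in this basis, the spatial analogue of Lemma~\ref{Prol} forces the coefficients to be \emph{constants} (there is now no time variable to absorb them, and the $S_{m+1}$-symmetry argument applies verbatim). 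Injectivity of the prolongation map then lets me descend to $\mathbb{R}^n$: the $X_1,\ldots,X_r$ span a finite-dimensional space $V_0\subset\mathfrak{X}(\mathbb{R}^n)$ with $[X_j,X_k]=\sum_l c_{jkl}X_l$ for constants $c_{jkl}$, so $V_0$ is a finite-dimensional Lie algebra and $X_t\in V_0$ for every $t$, i.e. $X\in V_0(\mathbb{R})$.

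The converse is the easier half. Given $V_0$ with basis $X_1,\ldots,X_r$ and $[X_j,X_k]=\sum_l c_{jkl}X_l$, the prolongations satisfy $[\widehat{X}_j,\widehat{X}_k]=\sum_l c_{jkl}\widehat{X}_l$ and therefore span an involutive distribution on $\mathbb{R}^{n(m+1)}$ of rank at most $r$, independently of $m$. For $m$ large enough this distribution is at least $n$-codimensional; as in the final paragraph of the proof of Theorem~\ref{MT} I would enlarge the resulting foliation, if necessary, to a codimension-$n$ foliation that is horizontal with respect to $\pi$, and read off from it the desired time-independent superposition rule for $X$, which writes $X_t=\sum_j b_j(t)X_j\in V_0$.

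The step I expect to be delicate is the descent in the direct implication: controlling the Lie algebra generated by the \emph{infinite} family $\{\widehat{X}_t\}_{t}$, guaranteeing it is finite-dimensional from the bounded rank of $\mathcal{D}$, and—most importantly—checking that the resulting structure functions come out honestly constant rather than $t$-dependent. This is precisely the feature that separates genuine Lie systems from the Lie families of Theorem~\ref{MT}, where the analogous functions $f_{jkl}(t)$ are allowed to vary with time; the time-independence of the foliation $\mathfrak{F}$ is exactly what removes that freedom, and making that removal rigorous—rather than the finite-rank bookkeeping, which is routine—is the crux.
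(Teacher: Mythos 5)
The paper does not actually prove this Proposition---it is the classical Lie Theorem, recalled from \cite{LS,CGM07}---but your argument is correct and is precisely the time-independent specialization of the paper's own proof of Theorem \ref{MT} (equivalently, the geometric proof of \cite{CGM07}): replacing time-prolongations on $\mathbb{R}\times\mathbb{R}^{n(m+1)}$ by prolongations on $\mathbb{R}^{n(m+1)}$ removes the $\partial/\partial t$ term, so the analogue of Lemma \ref{Prol} (same $S_{m+1}$-symmetry argument, no time variable left for the coefficients to depend on) turns the structure functions $f_{jkl}(t)$ into genuine constants, which is exactly the crux you identify. The two points you leave implicit---that $\pi_*\widehat{X}_j$ are linearly independent because the $\widehat{X}_j$ are tangent to the leaves of a foliation on which $\pi$ restricts to a local diffeomorphism, and that horizontality in the converse requires $m$ large enough for $Y\mapsto(Y(x_{(1)}),\dots,Y(x_{(m)}))$ to be injective on $V_0$ at generic points---are glossed over to the same degree in the paper's proof of Theorem \ref{MT}, so your proposal is at the paper's own level of rigor.
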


\begin{definition}
A {\em quasi-Lie scheme} $S(W,V)$ on the manifold $M$ consists of two finite-dimensional vector spaces of
vector fields $W,V\subset\mathfrak{X}(M)$ such that
\begin{itemize}
 \item $W$ is a linear subspace of $V$.
\item $W$ is a Lie algebra of vector fields, that is, $[W,W]\subset W$. \item $W$ normalizes $V$, i.e.,
$[W,V]\subset V$.
\end{itemize}
\end{definition}

 It has been proved in \cite{CGL08} that given a quasi-Lie scheme $S(W,V)$, the space $V(\mathbb{R})$
 is stable under the action of the infinite-dimensional group $\mathcal{G}(W)$ of generalized flows of
 vector fields in $W(\mathbb{R})$, i.e., $g_\di X\in V(\mathbb{R})$, for every $X\in V(\mathbb{R})$ and $g\in \mathcal{G}(W)$.

\begin{definition}
Given a quasi-Lie scheme $S(W,V)$, we say that a time-dependent vector field $X\in V(\mathbb{R})$ is a {\it
quasi-Lie system} with respect to this scheme, if there exist a generalized flow $g\in\mathcal{G}(W)$ and a
Lie algebra of vector fields $V_0\subset V$, such that $g_\di X\in V_0(\mathbb{R})$.
\end{definition}

As for each Lie system $X$ there exists a Lie algebra of vector fields $V_0$ such that $X\in V_0(\mathbb{R})$,
it is obvious that $X\in S(V_0,V_0)$ and, consequently, every Lie system is also a quasi-Lie system.

From now on, given a quasi-Lie scheme $S(W,V)$, a generalized flow $g\in\mathcal{G}(W)$, and a Lie algebra of
vector fields $V_0\subset V$, we denote with $S_g(W,V;V_0)$ the set of quasi-Lie systems of the scheme
$S(W,V)$ such that $g_\di X\in V_0(\mathbb{R})$.

\begin{proposition}\label{QL}
The family of quasi-Lie systems $S_g(W,V;V_0)$ is a Lie family admitting the common time-dependent
superposition function of the form
\begin{equation}\label{sf}
\bar{\Phi}_g(t,x_{(1)},\dots,x_{(m)},k)=g_t^{-1}\circ\Phi\left(g_t(x_{(1)}),\dots,g_t(x_{(m)}),k\right)\,,
\end{equation}
for any time-independent superposition function $\Phi$ associated with the Lie algebra of vector fields $V_0$
by Lie Theorem.
\end{proposition}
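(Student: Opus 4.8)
The plan is to exploit the fact that the generalized flow $g$ simultaneously straightens every member of the family $S_g(W,V;V_0)$ into a system lying in the single Lie algebra $V_0$, so that one time-independent superposition function $\Phi$ --- the one furnished by Lie Theorem for $V_0$ --- serves all of them at once. In this way the time-dependent superposition rule for the family is obtained by conjugating $\Phi$ with the flow $g$, which is precisely the content of formula (\ref{sf}).

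First I would record the transformation law for solutions. By the defining property $g^{g_\di X}=g\circ g^X$ of the action $\di$, the integral curve of $g_\di X$ issuing from $x_0$ is $t\mapsto g_t(g^X_t(x_0))$. Hence, if $x(t)$ is any solution of a system $X$, then $g_t(x(t))$ solves $g_\di X$, and conversely $t\mapsto g_t^{-1}(y(t))$ solves $X$ whenever $y(t)$ solves $g_\di X$. This bijection between solution sets is the engine of the whole argument. Next I would invoke the definition of the family: for every $X\in S_g(W,V;V_0)$ we have $g_\di X\in V_0(\mathbb{R})$, so by Lie Theorem the finite-dimensional Lie algebra $V_0$ supplies a time-independent superposition function $\Phi$ valid for \emph{every} system with values in $V_0$. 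Crucially, this same $\Phi$ works for each $g_\di X$ as $X$ ranges over the family, since all of them lie in the one space $V_0(\mathbb{R})$.

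Then I would assemble the superposition rule. Given $X\in S_g(W,V;V_0)$, a generic set of particular solutions $x_{(1)}(t),\dots,x_{(m)}(t)$, and an arbitrary solution $x(t)$, I push all of them forward by $g_t$ to obtain solutions of $g_\di X\in V_0(\mathbb{R})$; applying $\Phi$ produces constants $k$ with $g_t(x(t))=\Phi(g_t(x_{(1)}(t)),\dots,g_t(x_{(m)}(t)),k)$, and pulling back by $g_t^{-1}$ yields exactly $x(t)=\bar{\Phi}_g(t,x_{(1)}(t),\dots,x_{(m)}(t),k)$ with $\bar{\Phi}_g$ as in (\ref{sf}). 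Because $\Phi$ did not depend on the chosen $X$, the resulting map is a single common time-dependent superposition rule for the entire family.

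The main obstacle I anticipate is bookkeeping rather than conceptual: keeping the direction of the flow action straight (forward $g_t$ on solutions of $X$, backward $g_t^{-1}$ on solutions of $g_\di X$) in agreement with the convention $g^{g_\di X}=g\circ g^X$, and checking that the genericity of the family of particular solutions is preserved by the local diffeomorphisms $g_t$ so that $\Phi$ may legitimately be applied. Both points are routine once the solution-transformation law above is fixed.
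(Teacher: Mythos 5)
Your proposal is correct, but it proves the proposition by a genuinely different route than the paper does. You verify formula (\ref{sf}) directly from the definition of a common time-dependent superposition rule: using the solution-transformation law $g^{g_\di X}=g\circ g^X$ you push every solution of $X\in S_g(W,V;V_0)$ forward to a solution of $g_\di X\in V_0(\mathbb{R})$, apply the single time-independent superposition function $\Phi$ that Lie Theorem attaches to $V_0$, and pull back by $g_t^{-1}$; since $\Phi$ is the same for every member of the family, this yields one common rule and hence the Lie-family property as an immediate corollary. The paper instead works at the level of vector fields: it picks a basis $Z_1,\dots,Z_r$ of $V_0$, augments it with $Z_0=0$ so that the coefficients in $\overline{g_\di Y}=\sum_{j=0}^r b_j(t)\bar Z_j$ sum to one, transports the closed bracket relations back through $\bar g_*^{-1}$ using identity (\ref{aut}) to get generators $\bar Z_j'$ with the same structure constants, and then invokes the Generalized Lie Theorem (Theorem \ref{MT}) to conclude that $S_g(W,V;V_0)$ is a Lie family, deferring the explicit form (\ref{sf}) to a citation. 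Your argument is more elementary and actually carries more of the burden for the formula itself, which the paper only asserts is ``easily derived''; what the paper's route buys is the exhibition of an explicit system of generators $\bar Z_0',\dots,\bar Z_r'$, which ties the proposition into the structural framework of Theorem \ref{MT} and supports the subsequent remark that every quasi-Lie system embeds in a Lie family satisfying that theorem. The two points you flag as remaining bookkeeping (the direction of the action and the preservation of genericity under the local diffeomorphisms $g_t$) are indeed routine and do not hide any gap.
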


\begin{proof}
Let $Z_1,\ldots,Z_r$ be a basis in $V_0$. Since $V_0$ is closed with respect to the Lie bracket,
\begin{equation}\label{rel0}
[Z_j,Z_k]=\sum_{l=0}^rc_{jkl} Z_l
\end{equation}
for some constants $c_{jkl}$, $j,k,l=1,\ldots,r$. For any $Y\in S_g(W,V;V_0)$, there exist functions $b_j$
such that
$$
(g_{\di}Y)_t(x)=\sum_{j=1}^rb_j(t)Z_j(x)\,.
$$
Consequently, for the autonomization we can write
\begin{equation}\label{DeC}
\overline{g_{\di}Y}(t,x)=\sum_{j=0}^{r}b_j(t)\bar Z_j(t,x)\,,
\end{equation}
where we put $Z_0=0$ (thus $\bar Z_0=\partial/\partial t$) and $b_0(t)=1-\sum_{j=1}^{r}b_j(t)$.

Note that, as $Z_k$ are time-independent, the autonomizations $\bar Z_k$, $k=0,\dots,r$, form a Lie algebra:
\begin{equation}\label{rel}
[\bar Z_j,\bar Z_k]=\sum_{l=0}^rc_{jkl}\bar Z_l\,,
\end{equation}
where $c_{0kl}=c_{j0l}=0$ and $c_{jk0}=-\sum_{l=1}^rc_{jkl}$, for $j,k=1,\dots,r$. Hence, according to
(\ref{aut}), the autonomizations $\bar Z'_k=\overline{g_{\di}^{-1}(Z_k)}$ are also closed with respect to the bracket,
\begin{equation}\label{rel1}
[\bar Z'_j,\bar Z'_k]=\sum_{l=0}^rc_{jkl}\bar Z'_l\,,
\end{equation}
and, in view of (\ref{DeC}), the autonomization of any $Y\in S_g(W,V;V_0)$ can be written in the form
$$
\bar Y(t,x)=\sum_{j=0}^{r}b_j(t)\bar Z'_j(t,x)\,.
$$
This means, in view of Theorem \ref{MT}, that $S_g(W,V;V_0)$ is a Lie family. The form (\ref{sf}) can be now
easily derived (see \cite[Theorem 4]{CGL08}).
\end{proof}

In view of the above proposition, every quasi-Lie system and, consequently, every Lie system can be included
in a Lie family satisfying Theorem \ref{MT}. This fact  justifies once more calling this theorem Generalized
Lie Theorem.

\section{Applications.}\label{Examples}
 In this section we will apply common time-dependent superposition rules for studying some first- and second-order
 differential equations. In this way, we will show how that common time-dependent superposition rules can be used
 to analyze equations which cannot be studied by means of the usual theory of Lie systems. Additionally, some new results for the study of Abel and Milne--Pinney equations are provided.

\subsection{A time-dependent superposition rule for Abel equations}
We illustrate here our theory by deriving a common time-dependent superposition rule for a Lie family of Abel
equations whose elements do not admit a standard superposition rule except for a few particular instances. In
this way, we single out that our theory provides new tools for investigating solutions of non-autonomous
systems of differential equations than cannot be analyzed by means of the theory of Lie systems.

With this aim, we analyze the so-called Abel equations of the first-type \cite{Bo05,TR05}, i.e., the
differential equations of the form
\begin{equation}\label{Abel}
 \frac{dx}{dt}=a_0(t)+a_1(t)x+a_2(t)x^2+a_3(t)x^3,
\end{equation}
with $a_3(t)\neq 0$. Abel equations appear in the analysis of several cosmological models \cite{MML08,HM04,CK86} and other different fields in Physics \cite{ZT09,SP03,CLS04,GE04,Es02,CLP01}. Additionally, the study of integrability conditions for Abel equations is a research topic of current interest in Mathematics and multiple studies have been carried out in order to analyze the properties of the solutions of these equations \cite{TR05,Ch40,SR82,Al07,MCH01}.

Note that, apart from its inherent mathematical interest, the knowledge of particular solutions of Abel equations allows us to study the properties of those physical systems that such equations describe. Thus, the expressions enabling us to obtain easily new solutions of Abel equations by means of several particular ones, like common time-dependent superposition rules, are interesting to study the solutions of these equations and, therefore, their related physical systems.

Unfortunately, all the expressions describing the general solution of Abel equations presently known can only be applied to study autonomous instances and, moreover, they depend on families of particular conditions satisfying certain extra conditions, see \cite{Ch40,SR82}. Taking this into account, common time-dependent superposition rules represent an improvement with respect to these previous expressions, as they permit one to treat non-autonomous Abel equations and they do not require the use of particular solutions satisfying additional conditions.

Recall that, according to Theorem \ref{MT}, the existence of a common time-dependent superposition rule for a
family of time-dependent vector fields (\ref{FamVec}) requires the existence of a system of generators, i.e.,
a certain set of time-dependent vector fields, $X_1,\ldots, X_r$, satisfying relations (\ref{condition}).
Conversely, given such a set, the family of time-dependent vector fields $Y$ whose autonomizations can be
written in the form
$$\bar Y_\alpha(t,x)=\sum_{j=1}^rb_j(t)\bar X_j(t,x),\qquad \sum_{j=1}^rb_j(t)=1,$$
admits a common time-dependent superposition rule and becomes a Lie family.

Consequently, a Lie family of Abel equations can be determined, for instance, by finding two time-dependent
vector fields of the form
\begin{equation}\label{ansatz}
\begin{aligned}
X_1(t,x)&=(b_0(t)+b_1(t)x+b_2(t)x^2+b_3(t)x^3)\frac{\partial}{\partial x},\\
X_2(t,x)&=(b'_0(t)+b'_1(t)x+b'_2(t)x^2+b'_3(t)x^3)\frac{\partial}{\partial x},\qquad b'_3(t)\neq 0,
\end{aligned}
\end{equation}
such that
\begin{equation}\label{cond}
[\bar X_1,\bar X_2]=2(\bar X_2-\bar X_1).
\end{equation}

Let us analyze the existence of such two time-dependent vector fields $X_1$ and $X_2$ holding relation
(\ref{cond}). In coordinates, the Lie bracket $[\bar X_1,\bar X_2]$ reads
\begin{multline*}
[(b_3'b_2-b_2'b_3)x^4+(2(b_3'b_1-b_3b_1')-\dot b_3+\dot b_3')x^3+(-3(b_0'b_3-b_0b_3')+(b_2'b_1-b_2b_1')\\-\dot
b_2+\dot b_2')x^2+(-2b_0'b_2+2b_0b'_2-\dot b_1+\dot b_1')x-b_0'b_1+b_0b_1'-\dot b_0+\dot
b_0']\frac{\partial}{\partial x}.
\end{multline*}
Hence, in order to satisfy condition (\ref{cond}), $b_3'b_2-b_2'b_3=0$, e.g. we may fix $b_2=b_3=0$. Additionally, for the sake of simplicity, we assume $b_3'=1$. In this case, the previous expression takes the form
\begin{equation*}
[2b_1x^3+(3b_0+b_2'b_1+\dot b_2')x^2+(2b_0b'_2-\dot b_1+\dot b_1')x-b_0'b_1+b_0b_1'-\dot b_0+\dot
b_0']\frac{\partial}{\partial x},
\end{equation*}
and, taking into account the values chosen for $b_2$, $b_3$ and $b_3'$, assumption (\ref{cond}) yields $b_1=1$
and
\begin{equation*}\left\{
\begin{aligned}
 b_2'&=3b_0+\dot b_2',\\
 2(b_1'-1)&=2b_0b'_2+\dot b_1',\\
2(b_0'-b_0)&=-b_0'+b_0b_1'-\dot b_0+\dot b_0'.
\end{aligned}\right.
\end{equation*}
As the above system has more variables than equations, we can try to fix some values of the variables in order
to simplify it and obtain a particular solution. In this way, taking $b_0(t)=t$, the above system reads
\begin{equation*}\left\{
\begin{aligned}
 \dot b_2'&=b_2'-3t,\\
 \dot b_1'&=2(b_1'-1)-2tb'_2,\\
\dot b_0'&=2(b_0'-t)+b_0'-tb_1'+1.
\end{aligned}\right.
\end{equation*}
This system is integrable by quadratures and it can be verified that it admits the particular solution
$$b_2'(t)=3(1+t),\quad  b_1'(t)=3(1+t)^2+1,\quad  b_0'(t)=(1+t)^3+t.$$
Summing up, we have proved that the time-dependent vector fields
\begin{equation}\label{fami}
\left\{\begin{aligned} X_1(t,x)&=(t+x)\frac{\partial}{\partial x},\\
X_2(t,x)&=((1+t)^3+t+(3(1+t)^2+1)x+3(1+t)x^2+x^3)\frac{\partial}{\partial x},
\end{aligned}\right.
\end{equation}
satisfy (\ref{cond}) and, therefore, the family of time-dependent vector fields
$Y_{b(t)}(t,x)=(1-b(t))X_1(x)+b(t)X_2(x)$ is a Lie family. The corresponding family of Abel equations is
\begin{equation}\label{LieFamily}
\frac{dx}{dt}=(t+x)+b(t)(1+t+x)^3.
\end{equation}
According to the results proved in Section \ref{GLT}, in order to determine a common time-dependent
superposition rule for the above Lie family we have to determine a first-integral for the vector fields of the
distribution $\mathcal{D}$ spanned by the time-prolongations $\widetilde X_1$ and $\widetilde X_2$ on
$\mathbb{R}\times\mathbb{R}^{n(m+1)}$ for a certain $m$ so that the time-prolongations of $X_1$ and $X_2$ to
$\mathbb{R}\times \mathbb{R}^{nm}$ were linearly independent at a generic point. Taking into account
expressions (\ref{fami}), the prolongations of the vector fields $X_1$ and $X_2$ to
$\mathbb{R}\times\mathbb{R}^2$ are linearly independent at a generic point and, in view of (\ref{cond}), the
time-prolongations $\widetilde X_1$ and $\widetilde X_2$ to $\mathbb{R}\times\mathbb{R}^3$ span an involutive
generalized distribution $\mathcal{D}$ with leaves of dimension two in a dense subset of
$\mathbb{R}\times\mathbb{R}^3$. Finally, a first-integral for the vector fields in the distribution
$\mathcal{D}$ will provide us a common time-dependent superposition rule for the Lie family (\ref{LieFamily}).

Since, in view of (\ref{cond}), the vector fields $\widetilde X_1$ and $\widetilde X_2$ span the distribution
$\mathcal{D}$, a function $G:\mathbb{R}\times\mathbb{R}^2\rightarrow \mathbb{R}$ is a first-integral of the
vector fields of the distribution $\mathcal{D}$ if and only if $G$ is a first-integral of $\widetilde X_1$ and
$\widetilde X_1-\widetilde X_2$, i.e. $\widetilde X_1G=(\widetilde X_2-\widetilde X_1)G=0$.

The condition $\widetilde X_1G=0$ reads
$$
\frac{\partial G}{\partial t}+(t+x_0)\frac{\partial G}{\partial x_0}+(t+x_1)\frac{\partial G}{\partial x_1}=0,
$$
and, using the method of characteristics \cite{MetChar}, we note that the curves on which $G$ is constant, the
so-called {\it characteristics}, are solutions of the system
$$
dt=\frac{dx_0}{t+x_0}=\frac{dx_1}{t+x_1}\Rightarrow \frac{dx_i}{dt}=t+x_i,\qquad i=0,1,
$$
i.e., $x_i(t)=\xi_ie^{t}-t-1$, with $i=0,1$. These solutions are determined by the implicit equations
$\xi_0=e^{-t}(x_0+t+1)$ and $\xi_1=e^{-t}(x_1+t+1)$, with $\xi_0,\xi_1\in\mathbb{R}$. Therefore, there exists
a function $G_2:\mathbb{R}^2\rightarrow \mathbb{R}$ such that $G(t,x_0,x_1)=G_2(\xi_0,\xi_1)$. In other words,
each first-integral $G$ of $\widetilde X_1$ depends only on $\xi_0$ and $\xi_1$.

Taking into account the previous fact, we look for first-integrals of the vector field $\widetilde
X_2-\widetilde X_1$ being also first-integrals of $\widetilde X_1$, that is, for solutions of the equation
$(\widetilde X_2-\widetilde X_1)G=0$ with $G$ depending on $\xi_0$ and $\xi_1$. Using the expression of
$\widetilde X_2-\widetilde X_1$ in the system of coordinates $\{t, \xi_0,\xi_1\}$, we get that
$$
\xi_0^3\frac{\partial G}{\partial \xi_0}+\xi_1^3\frac{\partial G}{\partial \xi_1}=\xi_0^3\frac{\partial
G_2}{\partial \xi_0}+\xi_1^3\frac{\partial G_2}{\partial \xi_1}=0,
$$
and, applying again the method of characteristics, we obtain that there exists a function
$G_3:\mathbb{R}\rightarrow\mathbb{R}$ such that $G(t,x_0,x_1)=G_2(\xi_0,\xi_1)=G_3(\Delta)$, where
$\Delta=e^{2t}((x_0+t+1)^{-2}-(x_1+t+1)^{-2})$. Finally, using this first-integral, we get that the common
time-dependent superposition rule for the Lie family (\ref{LieFamily}) reads
$$
k=e^{2t}((x_0+t+1)^{-2}-(x_1+t+1)^{-2}),
$$
with $k$ being a real constant. Therefore, given any particular solution $x_1(t)$ of a particular instance of
the family of first-order Abel equations (\ref{LieFamily2}), the general solution, $x(t)$, of this instance is
$$
x(t)=\left((x_1(t)+t+1)^{-2}+k e^{-2t}\right)^{-1/2}-t-1.
$$

Note that our previous procedure can be straightforwardly generalized to derive common time-dependent superposition rules for generalized Abel equations \cite{Mo03}, i.e., the differential equations of the form
$$
\frac{dx}{dt}=a_{0}(t)+a_1(t)x+a_2(t)x^2+\ldots+a_n(t)x^n, \qquad n\geq 3.
$$
Actually, their study can be approached by analyzing the existence of two vector fields of the form
\begin{equation*}
\begin{aligned}
Y_1(t,x)&=(b_0(t)+b_1(t)x+\ldots+b_n(t)x^n)\frac{\partial}{\partial x},\\ Y_2(t,x)&=(b'_0(t)+b'_1(t)x+\ldots+b'_n(t)x^n)\frac{\partial}{\partial x},\qquad b'_n(t)\neq 0,
\end{aligned}
\end{equation*}
satisfying the relation $[\bar Y_1,\bar Y_2]=2(\bar Y_2-\bar Y_1)$ and following a procedure similar to the one developed above.

 \subsection{Lie families and second-order differential equations}
Common time-dependent superposition rules describe solutions of non-autonomous systems of first-order differential equations. Nevertheless, we shall now illustrate how this new kind of superposition rules can be applied to analyze also families of second-order differential equations. More specifically, we shall derive a common time-dependent superposition rule in order to express the general solution of any instance of a family of Milne--Pinney equations \cite{Ch40,Re99,ReIr01} in terms of each generic pair of particular solutions, two constants, and the time. In this way, we provide a generalization to the setting of dissipative Milne--Pinney equations of the expression previously derived to analyze the solutions of Milne--Pinney equations in \cite{CL08M}.

Consider the family of dissipative Milne--Pinney equations  \cite{Re99,ReIr01,Th52, PK07} of the form
\begin{equation}\label{InfFam}
 \begin{aligned}
 \ddot x&=-\dot F\dot x+\omega^2x+e^{-2F}x^{-3},
 \end{aligned}
\end{equation}
with a fixed time-dependent function $F=F(t)$, and parametrized by an arbitrary time-dependent function
$\omega=\omega(t)$. The physical motivation for the study of dissipative Milne--Pinney equations comes from its appearance in dissipative quantum mechanics \cite{Sr86,Ha75,Na86,NBA97}, where, for instance, their solutions are used to obtain Gaussian solutions of non-conservative time-dependent quantum oscillators \cite{Na86}. Moreover, the mathematical properties of the solutions of dissipative Milne--Pinney equations have been studied by several authors from different points of view as well as for different purposes \cite{CL08M,CGL08,CL08Diss,Re99,ReIr01,Wa68,Ha10,RC82}. As relevant instances, consider the works \cite{CL08Diss,Re99} which outline the state-of-the-art of the investigation of dissipative and non-dissipative Milne--Pinney equations. One of the main achievements on this topic (see \cite[Corollary 5]{Re99}) is concerned with an expression describing the general solution of a particular class of these equations in terms of a pair of generic particular solutions of a second-order linear differential equations and two constants. Recently the theory of quasi-Lie schemes and the theory of Lie systems enabled us to recover this latter result and other new ones from a geometric point of view \cite{CLR08c,CGL08}.

Note that introducing a new variable $v\equiv \dot x$, we transform the family (\ref{InfFam}) of second-order
differential equations into a family of first-order ones
\begin{equation}\label{LieFamily2}
\left\{\begin{aligned}
\dot x&=v,\\
\dot v&=-\dot Fv+\omega^2 x+e^{-2F}x^{-3},
\end{aligned}\right.
\end{equation}
whose dynamics is described by the following family of time-dependent vector fields on ${\rm T}\mathbb{R}$
parametrized by $\omega$,
\begin{equation*}
Y_\omega=\left(-\dot Fv+e^{-2F}x^{-3}+\omega^2x \right)\pd{}{v}+v\pd{}{x}, \qquad \omega\in
\Lambda=C^{\infty}(t).
\end{equation*}
Let us show that the above family is a Lie family whose common superposition rule can be used to analyze the
solutions of the family (\ref{InfFam}).

In view of Theorem \ref{MT}, if the family of systems related to the above family of time-dependent vector
fields is a Lie family, that is, it admits a common time-dependent superposition rule in terms of $m$
particular solutions, then the family of vector fields on $\mathbb{R}\times\mathbb{R}^{n(m+1)}$ given by
\begin{equation}\label{fam}\widetilde Y_\omega, \,\, [\widetilde Y_\omega,\widetilde Y_{\omega'}],\,\,
[\widetilde Y_\omega,[\widetilde Y_{\omega'},\widetilde Y_{\omega''}]],\,\,[\widetilde Y_\omega,
[\widetilde Y_{\omega'},[\widetilde Y_{\omega''},\widetilde Y_{\omega'''}]]]\ldots,\,\, \omega,\omega',\omega'',\omega''',\ldots\in \Lambda,
\end{equation}
spans an involutive generalized distribution with leaves of rank $r\leq n\cdot m+1$.

Note that the distribution spanned by all $\widetilde Y_\omega$ is generated by the vector fields $\widetilde
Y_1$ and $\widetilde Y_2$, with
\begin{equation*}
Y_1=\left(-\dot Fv+e^{-2F}x^{-3}+x \right)\pd{}{v}+v\pd{}{x}, \quad Y_2=\left(-\dot
Fv+e^{-2F}x^{-3}\right)\pd{}{v}+v\pd{}{x},
\end{equation*}
since $\widetilde Y_\omega= (1-\omega^2)\widetilde Y_2+\omega^2 \widetilde Y_1$.  It is easy to see that the
prolongation $[\widetilde Y_1,\widetilde Y_2]$ is not spanned by $\widetilde Y_1$ and $\widetilde Y_2$ and, so
that we have to include the prolongation $\widehat Y_3=[\widetilde Y_1,\widetilde Y_2]$ to the picture, where
$$
Y_3=x\pd{}{x}-(v+x\dot F)\pd{}{v}.
$$
In the case $m=0$, the distribution spanned by the vector fields, $\widetilde Y_1,\widetilde Y_2,\widehat
Y_3$, does not admit a non-trivial first-integral.  In the case $m>0$, the vector fields $\widetilde
Y_1,\widetilde Y_2,\widehat Y_3$ do not span all the elements of family (\ref{fam}) and we need to add to them
the prolongation  $\widehat Y_4=[\widetilde Y_1,[\widetilde Y_1,\widetilde Y_2]]$, with
$$
Y_4=(2v+x\dot F)\pd{}{x}+(2e^{-2F}x^{-3}-2x-\dot F(v+x\dot F)-x\ddot F)\pd{}{v}.
$$
The vector fields $\widetilde Y_1,\widetilde Y_2,\widehat Y_3,\widehat Y_4$ satisfy the commutation relations
$$
\begin{aligned}
\left[\widetilde Y_1,\widetilde Y_2\right]&=\widehat Y_3,\cr \left[\widetilde Y_1,\widehat
Y_3\right]&=\widehat Y_4,\cr \left[\widetilde Y_1,\widehat Y_4\right]&=(4+\dot F^2+2\ddot F)\widehat Y_3-(\dot
F\ddot F+\dddot F)(\widetilde Y_1-\widetilde Y_2),\cr \left[\widetilde Y_2,\widehat Y_3\right]&=2(\widetilde
Y_1-\widetilde Y_2)+\widehat Y_4,\cr \left[\widetilde Y_2,\widehat Y_4\right]&=(2+\dot F^2+2\ddot F)\widehat
Y_3-(\dot F\ddot F+\dddot F)(\widetilde Y_1-\widetilde Y_2),\cr
\left[\widehat Y_3,\widehat Y_4\right]&=-2\widehat Y_4-2(\widetilde Y_1-\widetilde Y_2) (4+\dot F^2+2\ddot F).\\
\end{aligned}
$$
Consequently, the vector fields $\widetilde Y_1,\widetilde Y_2,\widehat Y_3, \widehat Y_4$ span the vector
fields of the family (\ref{fam}). Adding $\widetilde Y_1$ to each prolongation of the previous set, that is,
considering the vector fields $\widetilde X_1=\widetilde Y_1$, $\widetilde X_2=\widetilde Y_2$, $\widetilde
X_3=\widetilde Y_1+\widehat Y_3$, and $\widetilde X_4=\widetilde Y_1+\widehat Y_4$, we get that the family of
time-prolongations, $\widetilde X_1,\widetilde X_2,\widetilde X_3,\widetilde X_4$, which spans the vector
fields of the family (\ref{fam}). The commutation relations among them read
$$
\begin{aligned}
\left[\widetilde X_1,\widetilde X_2\right]&=\widetilde X_3-\widetilde X_1,\cr \left[\widetilde X_1,\widetilde
X_3\right]&=\widetilde X_4-\widetilde X_1,\cr \left[\widetilde X_1,\widetilde X_4\right]&=-(\dot F\ddot
F+\dddot F+4+\dot F^2+2\ddot F)\widetilde X_1+(\dot F\ddot F+\dddot F)\widetilde X_2+(4+\dot F^2+2\ddot
F)\widetilde X_3,\cr \left[\widetilde X_2,\widetilde X_3\right]&=2\widetilde X_1-2\widetilde X_2-\widetilde
X_3+\widetilde X_4,\cr \left[\widetilde X_2,\widetilde X_4\right]&=-(1+\dot F^2+2\ddot F+\dot F\ddot F+\dddot
F)\widetilde X_1+ (\dot F\ddot F+\dddot F)\widetilde X_2+(1+\dot F^2+2\ddot F)\widetilde X_3,\cr
\left[\widetilde X_3,\widetilde X_4\right]&=-3 \widetilde X_4+(4+\dot F^2+2\ddot F)\widetilde X_3+(8+\dddot
F+\dot F\ddot F+2\dot F^ 2+4\ddot
F)\widetilde X_2+\\
+&(-9-3\dot{ F}^2-6\ddot F-\dot F\ddot F-\dddot F)\widetilde X_1.\cr
\end{aligned}
$$
As a consequence of Lemma \ref{Prol}, we get that the vector fields $\bar X_1$, $\bar X_2$ $\bar X_3$ and
$\bar X_4$ close on the same commutation relations as the vector fields $\widetilde X_1$, $\widetilde X_2$,
$\widetilde X_3$, $\widetilde X_4$. Hence, in view of Theorem \ref{MT}, the family (\ref{LieFamily2}) is a Lie
family and the knowledge of non-trivial first-integrals of the vector fields of the distribution $\mathcal{D}$
spanned by $\widetilde X_1$, $\widetilde X_2$, $\widetilde X_3$, $\widetilde X_4$ provides us with a common
time-dependent superposition rule.

Note that, as the vector fields $\widetilde X_1$, $\widetilde X_1-\widetilde X_2$ and their Lie brackets span
the whole distribution $\mathcal{D}$, a function $G:\mathbb{R}\times {\rm T}\mathbb{R}^3\rightarrow
\mathbb{R}$ is a first-integral for the vector fields of the distribution $\mathcal{D}$ if and only if it is a
first-integral for the vector fields $\widetilde X_1$ and $\widetilde X_2-\widetilde X_1$. Therefore, we can
reduce the problem of finding first-integrals for the vector fields of the distribution $\mathcal{D}$ to
finding common first-integrals $G$ for the vector fields $\widetilde X_1$ and $\widetilde X_1-\widetilde X_2$.

Let us analyze the implications of $G$ being a first-integral of the vector field
$$
\widetilde X_1-\widetilde X_2=\sum_{i=0}^2x_i\frac{\partial}{\partial v_i}.
$$
The characteristics of the above vector field are the solutions of the system
$$
\frac{dv_0}{x_0}=\frac{dv_1}{x_1}=\frac{dv_2}{x_2},\qquad dx_0=0,\quad dx_1=0,\quad dx_2=0,\quad dt=0,
$$
that is, the solutions are curves in $\mathbb{R}\times{\rm T}\mathbb{R}^3$ of the form
$s\mapsto(t,x_0,x_1,x_2,v_0(s),v_1(s),v_2(s))$, with  $\xi_{02}=x_0v_2(s)-x_2v_0(s)$ and
$\xi_{12}=x_1v_2(s)-x_2v_1(s)$ for two real constants $\xi_{02}$ and $\xi_{12}$. Thus, there exists a function
$G_2:\mathbb{R}^6\rightarrow\mathbb{R}$ such that $G(p)=G_2(t,x_0,x_1,x_2,\xi_{02},\xi_{12})$, with
$p\in\mathbb{R}\times {\rm T}\mathbb{R}^3$, $\xi_{02}=x_0v_2-x_2v_0$, and $\xi_{12}=x_1v_2-v_1x_2$. In other
words, $G$ is a function of $t,x_0,x_1,x_2,\xi_{02},\xi_{12}$.

The function $G$ also satisfies the condition $\widetilde X_1G=0$ which, in terms of the coordinate system
$\{t,x_0,x_1,x_2,\xi_{02} \xi_{12},v_2\}$, reads
\begin{multline*}
\widetilde X_1 G=\frac{\partial G}{\partial t}+\frac{(x_0v_2-\xi_{02})}{x_2}\frac{\partial G}{\partial
x_0}+\frac{(x_1v_2-\xi_{12})}{x_2}\frac{\partial G}{\partial x_1}+v_2\frac{\partial G}{\partial x_2}-\\-
\left[\dot F\xi_{12}+e^{-2F}\left(\frac{x_2}{x_1^3}-\frac{x_1}{x_2^3}\right)\right]\frac{\partial
G}{\partial\xi_{12}}-\left[\dot
F\xi_{02}+e^{-2F}\left(\frac{x_2}{x_0^3}-\frac{x_0}{x_2^3}\right)\right]\frac{\partial G}{\partial\xi_{02}}=0.
\end{multline*}
That is, defining the vector fields
$$
\begin{aligned}
\Xi_1&=\frac{\partial}{\partial t}-\frac{\xi_{12}}{x_2}\frac{\partial }{\partial x_1}-\frac{\xi_{02}}{x_2}\frac{\partial}{\partial x_0}
+\left[-\dot F\xi_{12}-e^{-2F}\left(\frac{x_2}{x_1^3}-\frac{x_1}{x_2^3}\right)\right]\frac{\partial }{\partial\xi_{12}}\\
&+\left[-\dot F\xi_{02}-e^{-2F}\left(\frac{x_2}{x_0^3}-\frac{x_0}{x_2^3}\right)\right]\frac{\partial }{\partial\xi_{02}},\\
\Xi_2&=\frac{x_0}{x_2}\frac{\partial }{\partial x_0}+\frac{x_1}{x_2}\frac{\partial }{\partial x_1}+\frac{\partial}{\partial x_2},\\
\end{aligned}
$$
the condition $\widetilde X_1G=0$ implies that $\Xi_1G_2+v_2 \Xi_2G_2=0$ and, as $G_2$ does not depend on
$v_2$, the function $G$ must be simultaneously a first-integral for $\Xi_1$ and $\Xi_2$, i.e., $\Xi_1G=0$ and
$\Xi_2G=0$.

Applying again the method of characteristics to the vector field $\Xi_2$, we get that $F$ can depend just on
the variables $t,\xi_{02},\xi_{12},\Delta_{02}=x_0/x_2$ and $\Delta_{12}=x_1/x_2$, that is, there exists a
function $G_3:\mathbb{R}^5\rightarrow\mathbb{R}$ such that
$G(t,x_0,x_1,x_2,v_0,v_1,v_2)=G_2(t,x_0,x_1,x_2,\xi_{02},\xi_{12})=G_3(t,\xi_{02},\xi_{12},\Delta_{02},\Delta_{12})$.

We are left to check out the implications of the equation $\Xi_1G=0$. Using the coordinate system
$\{t,\xi_{02},\xi_{12},\Delta_{02},\Delta_{12},v_2,x_2\}$ and taking into account that
$G(t,x_0,x_1,x_2,v_0,v_1,v_2)=G_3(t,\xi_{02},\xi_{12},\Delta_{02},\Delta_{12})$, the previous equation can be
cast into the form $\Xi_1G=\frac{1}{x_2^2}\Upsilon_1G_3+\Upsilon_2G_3=0$, where
$$
\begin{aligned}
\Upsilon_1&=\sum_{i=0}^1\left(-\xi_{i2}\frac{\partial }{\partial \Delta_{i2}}-e^{-2F}\left(\Delta_{i2}^{-3}-
\Delta_{i2}\right)\frac{\partial }{\partial\xi_{i2}}\right),\\
\Upsilon_2&= -\dot F\xi_{12}\frac{\partial}{\partial \xi_{12}}-\dot F\xi_{02}\frac{\partial}{\partial
\xi_{02}}+\frac{\partial}{\partial t}.
\end{aligned}
$$
As $G_3$ depends on the variables $t,\Delta_{02},\Delta_{12},\xi_{12},\xi_{02}$ only, we have that
$\Upsilon_1G=0$ and $\Upsilon_2G=0$. Repeating {\it mutatis mutandis} the previous procedures in order to
determine the implications of being a first-integral of $\Upsilon_1$ and $\Upsilon_2$, we finally get that the
first-integrals of the distribution $\mathcal{D}$ are functions of $I_1,I_2$ and $I$, with
$$
I_i=e^{2F}(x_0 v_i-x_iv_0)^2+\left[\left(\frac{x_0}{x_i}\right)^2+\left(\frac{x_i}{x_0}\right)^2\right],\qquad
i=1,2,
$$
and
$$
I=e^{2F}(x_1v_2-x_2v_1)^2+\left[\left(\frac{x_1}{x_2}\right)^2+\left(\frac{x_2}{x_1}\right)^2\right].
$$
Defining $\bar v_2=e^{F}v_2, \bar v_1=e^{F}v_1$ and $\bar v_0=e^Fv_0$, the above first-integrals read
$$
I_i=(x_0\bar v_i-x_i\bar
v_0)^2+\left[\left(\frac{x_0}{x_i}\right)^2+\left(\frac{x_i}{x_0}\right)^2\right],\qquad i=1,2,
$$
and
$$
I=(x_1\bar v_2-x_2\bar v_1)^2+\left[\left(\frac{x_1}{x_2}\right)^2+\left(\frac{x_2}{x_1}\right)^2\right].
$$
Note that these first-integrals have the same form as the ones considered in \cite{CLR08c} for $k=1$.
Therefore, we can apply the procedure done there to obtain that
\begin{equation}\label{Super}
x_0=\sqrt{k_1x_1^2+k_2x_2^2+
2\sqrt{\lambda_{12}[-(x_1^4+x_2^4)+I\,x_1^2x_2^2\,]}}\,,\\
\end{equation}
with $\lambda_{12}$ being a function of the form
\begin{equation*}
\lambda_{12}(k_1,k_2,I)= \frac{k_1k_2 I+(-1+k_1^2+k_2^2)}{I^2-4},
\end{equation*}
and where the constants $k_1$ and $k_2$ satisfy special conditions in order to ensure that $x_0$ is real
\cite{CL08M}.

Expression (\ref{Super}) permits us to determine the general solution, $x(t)$, of any instance of family
(\ref{InfFam}) in the form
\begin{equation}\label{Super2}
x(t)=\sqrt{k_1x_1^2(t)+k_2x_2^2(t)+
2\sqrt{\lambda_{12}[-(x_1^4(t)+x_2^4(t))+I\,x_1^2(t)x_2^2(t)\,]}}\,,\\
\end{equation}
with
$$
I=e^{2F(t)}(x_1(t)\dot x_2(t)-x_2(t)\dot
x_1(t))^2+\left[\left(\frac{x_1(t)}{x_2(t)}\right)^2+\left(\frac{x_2(t)}{x_1(t)}\right)^2\right],
$$
in terms of two of its particular solutions, $x_1(t)$, $x_2(t)$, its derivatives, the constants $k_1$ and
$k_2$, and the time (included in the constant of the motion $I$).

Note that the role of the constant $I$ in expression (\ref{Super2}) differs from the roles carried out by
$k_1$ and $k_2$. Indeed, the value of $I$ is fixed by the particular solutions $x_1(t)$, $x_2(t)$ and its
derivatives, while, for every pair of generic solutions $x_1(t)$ and $x_2(t)$, the values of $k_1$ and $k_2$
range within certain intervals ensuring that $x(t)$ is real.

It is clear that the method illustrated here can also be applied to analyze solutions of any other family of second-order differential equations related to a Lie family by introducing the new variable $v=\dot x$. Additionally, it is worth noting that in the case $F(t)=0$ the family of dissipative Milne--Pinney equations (\ref{InfFam}) reduces to a family of Milne--Pinney equations appearing broadly in the literature (see \cite{AL08} and references therein), and the expression (\ref{Super2}) takes the form of the expression obtained in \cite{CL08M} for these equations.

\section{Conclusions and Outlook.}
\noindent

We have proposed a generalization of Lie Theorem in order to characterize those families of non-autonomous systems of first-order ordinary differential equations, the so-called Lie families, that admit a common time-dependent superposition rule. We have studied the relations of quasi-Lie systems and Lie families.

In order to illustrate the usefulness of our achievements, we have derived common time-dependent superposition rules for studying dissipative Milne--Pinney equations and Abel equations. In the case of Abel equations, our result expresses the general solution of any particular instance of a Lie family of non-autonomous Abel equations in terms of each generic particular solution, a constant, and the time. In this way, we have initiated a new approach to study the solutions of these equations. Additionally, it is worth noting that the analyzed Lie family of Abel equations contains an autonomous instance admitting a special kind of superposition rule derived by Chiellini \cite{Ch40}. Unlike such a special superposition rule, our common superposition rule does not require the use of particular solutions obeying any kind of extra condition and, therefore, it clearly represents an improvement with respect to Chiellini's technique.

We have shown how common time-dependent superposition rules can be used to analyze second-order differential equations by means of the study of a family of Milne--Pinney equations. More specifically, we have derived a common-superposition rule allowing us to obtain the general solution of any instance of such a family in terms of a generic pair of its particular solutions, their derivatives in terms of the time, and the time. Such an expression represents an interesting improvement with respect to previous results and methods, as it generalizes the superposition rule given in \cite{CL08M} for the usual Milne--Pinney equations to the dissipative case.

We hope to get in the future new results on the theory of common time-dependent superposition rules and, additionally, to describe new applications, where our achievements can be used.
\section*{Acknowledgements}
 Partial financial support by research projects MTM2009-11154 and E24/1 (DGA)
 are acknowledged. Research of the second author financed by the Polish Ministry of Science and Higher Education under the
 grant N N201 365636.

\end{document}